\documentclass[11pt]{article}
\usepackage{graphicx}
\usepackage{amsmath,amsfonts,amsthm}
\usepackage{amssymb,latexsym}
\usepackage{fixmath}
\usepackage{mathrsfs,amsbsy}
\usepackage{dsfont}
\usepackage{enumerate}

\def\wtd{\widetilde}

\DeclareMathOperator{\diag}{diag}

\DeclareMathOperator{\F}{F}
\DeclareMathOperator{\HH}{H}
\DeclareMathOperator{\T}{T}

\def\ba{\pmb{a}}
\def\bb{\pmb{b}}
\def\bc{\pmb{c}}

\def\be{\pmb{e}}

\def\bm{\pmb{m}}

\def\bp{\pmb{p}}

\def\bt{\pmb{t}}

\def\bw{\pmb{w}}
\def\bx{\pmb{x}}
\def\by{\pmb{y}}

\newtheorem{theorem}{Theorem}[section]
\newtheorem{lemma}{Lemma}[section]
\newtheorem{corollary}{Corollary}[section]

\theoremstyle{definition}

\newtheorem{example}{Example}[section]

\numberwithin{equation}{section}
\numberwithin{figure}{section}
\numberwithin{table}{section}

\allowdisplaybreaks

\usepackage{kbordermatrix}

\usepackage{caption}
\usepackage{amsfonts,amsthm,yfonts}
\usepackage{amsmath,amssymb,enumerate,color}
\usepackage{algorithm,algorithmic}
\usepackage{epsfig,graphicx,psfrag,mathrsfs,subfigure}
\usepackage{eucal}
\usepackage{yhmath}
\usepackage{hyperref}
\usepackage{diagbox}
\RequirePackage{multirow}
 \usepackage{cleveref}
\usepackage{textcomp}
\usepackage[]{fontenc}
\usepackage{extarrows}
\def\wtd{\widetilde}

\usepackage{rotating}
\usepackage{lscape}
\usepackage{bm}
\usepackage{xurl}
\usepackage{xspace}
\usepackage{tikz}
\usetikzlibrary{petri} 
\usetikzlibrary{shadows,arrows,positioning,shapes.geometric} 

\usepackage{geometry}

\def\ba{\pmb{a}}
\def\bb{\pmb{b}}
\def\bc{\pmb{c}}

\def\be{\pmb{e}}

\def\bp{\pmb{p}}

\def\bt{\pmb{t}}

\def\bw{\pmb{w}}
\def\bx{\pmb{x}}
\def\by{\pmb{y}}

\def\bzs{\mathbf{0}}

\def\diag{{\rm diag}}

\def\scrR{\mathscr{R}}
\def\scrP{\mathscr{P}}

\def\BF{{\pmb{F}}}
\def\BW{{\pmb{W}_{\otimes}}}
\def\wtd{\widetilde}

\def\bbC{\mathbb{C}}

\def\bbP{\mathbb{P}}

\renewcommand{\algorithmicrequire}{\textbf{Input:}}
\renewcommand{\algorithmicensure}{\textbf{Output:}}

\numberwithin{equation}{section}
\numberwithin{figure}{section}
\numberwithin{table}{section}

\graphicspath{{figures/}{figure/}{pictures/}%
	{picture/}{pic/}{pics/}{image/}{images/}}

\title{Rational Minimax  Approximations for Matrix-Valued Functions: Existence, Optimality and Algorithms
}
 \author{Lei-Hong Zhang\thanks{Corresponding author. School of Mathematical Sciences, Soochow University, Suzhou 215006, Jiangsu, China. This work was
 supported in part by the National Natural Science Foundation of China (NSFC-12471356, NSFC-12371380).
        Email: {\tt longzlh@suda.edu.cn}.}\and Chenkun Zhang\thanks{School of Mathematical Sciences, Soochow University, Suzhou 215006, Jiangsu, China. Email: {\tt 19816896524@163.com}.}
        }
 
 \date{\today}

\begin{document}

\maketitle

\begin{abstract} 
In this paper, we study  rational minimax approximation for continuous complex matrix-valued functions in the Frobenius norm, where all approximant entries share a common denominator. This generalizes classical scalar rational approximation, with applications in system modeling, microwave design, and nonlinear eigenvalue problems. We first prove the existence of such matrix-valued approximants on point sets dense in themselves, extending Walsh's foundational scalar result. Next, we establish characterizations of the local and global minimax approximants by deriving primal/dual matrix-valued Kolmogorov criteria and a Ruttan-type sufficient condition for global optimality. For analytic functions on a continuum, we   link continuum minimax approximation to approximation on its boundary, and finite boundary samples via the maximum norm principle. We show that Ruttan's sufficient optimality condition provides a certificate under which a minimax approximant obtained from the boundary or from a discrete set of boundary nodes also solves the original continuum problem. Finally, for discrete approximation, we connect these conditions to a dual problem and the related dual-based numerical method {\sf m-d-Lawson}: when the original minimax problem admits a solution, strong duality is equivalent to Ruttan's sufficient optimality condition, and, the optimality equations underlying the {\sf m-d-Lawson} iteration coincide with Kolmogorov’s dual criteria. These results provide a theoretical basis for certifying and computing matrix-valued rational minimax approximants.
\end{abstract}

\medskip
{\small
{\bf Key words. matrix-valued functions, rational minimax approximation, Kolmogorov's condition, Ruttan's condition, Lawson's iteration, duality}    
\medskip

{\bf AMS subject classifications. 41A52, 41A50, 65D15, 49K35, 41A20, 90C46} 
}

\tableofcontents 
 
\section{Introduction}\label{sec_intro}
Rational minimax approximation is one of central tools in approximation theory and numerical computation \cite{appr:web,drnt:2024,nast:2023,natr:2026,tref:2019a}, especially when the target function has poles, sharp transitions, oscillatory behavior, or spectral features that are poorly captured by polynomial approximants. In many modern applications, the data to be approximated are naturally matrix-valued rather than scalar-valued. Such matrix-valued rational approximations arise, for example, in linear time-invariant systems \cite{begu:2017,gogu:2021}, frequency-domain multiport rational modeling \cite{demd:2009,gogu:2021,gust:2006,gust:2009,guse:1999}, computer-aided design of microwave duplexers \cite{trai:2010,trms:2007,zhzy:2025}, and the numerical solution of nonlinear eigenvalue problems \cite{guti:2017,gupt:2022,limp:2022,saem:2020,zhgz:2026}. These applications motivate approximation methods that approximate all entries of a matrix-valued function simultaneously while enforcing a shared denominator, a structure that is essential in system and frequency-domain models.

Suppose $F:\mathscr{B}\subseteq \bbC\rightarrow \bbC^{s\times t}$  is a given continuous matrix-valued function with  
\begin{equation}\label{eq:Fx}
F(x)=\left[\begin{array}{ccc}f_{11}(x) &  \cdots & f_{1t}(x)   \\\ \vdots& \ddots & \vdots  \\ f_{s1}(x) &  \cdots & f_{st}(x)\end{array}\right]\in \bbC^{s\times t},~s\ge 1, t\ge 1,
\end{equation}
where each entry function $f_{ij}:\mathscr{B}\subseteq \bbC\rightarrow \bbC$ is continuous on a compact set $\mathscr{B}$ in the complex plane.   
Let $\|F(x)\|_{\F}$ be the Frobenius norm of $F(x)$ and  $\bbP_{n}$ be the set of complex polynomials $p$ with degree less than or equal to $n$, i.e., $\deg(p)\le n$. In \cite{zhzz:2025},  the following rational minimax approximation problem for $F$ is proposed:
\begin{equation}\label{eq:bestf0}
\eta_{\infty,\mathscr{B}}:=\inf_{R \in\scrR_{(s,t)}}\sup_{x\in \mathscr{B}}\|F(x)-R(x)\|_{\F}^2,
\end{equation}
where  
\begin{equation}\label{eq:rats}
\scrR_{(s,t)}:=\left\{R(x)=\left[\begin{array}{ccc}r_{11}(x) &  \cdots & r_{1t}(x)   \\\ \vdots& \ddots & \vdots  \\ r_{s1}(x) &  \cdots & r_{st}(x)\end{array}\right]  \Big| r_{ij}(x)=\frac{p_{ij}(x)}{q(x)}, p_{ij}\in \bbP_{n_{ij}},~0\not\equiv q\in\bbP_{d}\right\},
\end{equation}
$n_{ij}\ge 0$ and $d\ge 0$ are prescribed integers. The $(i,j)$th entry function $\frac{p_{ij}(x)}{q(x)}$ of $R(x)$ is said to be a rational function of type $(n_{ij},d)$ where $p_{ij}\in \bbP_{n_{ij}}$ is the $(i,j)$th entry of a matrix-valued polynomial $P\in \scrP_{(s,t)}:\bbC\rightarrow \bbC^{s\times t}$.

As in the classical scalar case \cite{zhan:2026}, the underlying set $\mathscr{B}$ on which \eqref{eq:bestf0} is posed plays a decisive role. To present the continuum and discrete cases in a unified way, we use the following setting for \eqref{eq:bestf0}.
\begin{itemize}
\item If not specified otherwise, $\mathscr{B} \subset \mathbb{C}$ represents a general compact set; it can be a continuum or a finite set of points. 

\item A matrix-valued function $F(x)$ in $\mathfrak{C}(\mathscr{B})$ means that each entry $f_{ij}(x)$ is continuous on $\mathscr{B}$.

\item $\mathscr{D} \subset \mathbb{C}$ denotes a   continuum with boundary $\mathscr{L} := \partial \mathscr{D}$. Typical examples include closed intervals in the real or imaginary axis, the unit circle, and simply connected domains bounded by Jordan curves. A matrix-valued function $F(x)$ in $\mathfrak{C}_{A}(\mathscr{D})$ means that each entry $f_{ij}(x)$ is continuous on $\mathscr{D}$ and analytic in its interior whenever $\mathrm{int}(\mathscr{D}) \neq \emptyset$.

\item $\mathscr{X} = \{x_j\}_{j=1}^m$  is a  discrete set. In particular, as in \cite{zhzz:2025}, we assume
\begin{equation}\label{eq:numberm}
m\ge  \max_{1\le i\le s,1\le j\le t }\{n_{ij}+d+2\}.
\end{equation} 
For an $F\in \mathfrak{C}_{A}(\mathscr{D})$, the maximum Frobenius norm principle  (see  \cite[Theorem 1]{cond:2020}) implies that the Frobenius norm $\|E(x)\|_{\F}$ of the matrix-valued error function 
\begin{equation}\label{eq:Errfun}
E(x)=F(x) - R(x): \mathscr{D} \rightarrow \bbC^{s\times t}
\end{equation} 
attains its maximum on the boundary $\mathscr{L}=\partial \mathscr{D}$ 
for any $R(x)\in\scrR_{(s,t)}$ with poles outside $\mathscr{D}$; that is, 
$$\|F(x) - R(x)\|_{\infty, \mathscr{D}}:=\sup_{x \in \mathscr{D}} \|F(x) - R(x)\|_{\text{F}}^2 =\max_{x \in \partial\mathscr{D}} \|F(x) - R(x)\|_{\text{F}}^2.$$
Thus, numerical methods for finding a minimax approximation $R(x)$ of $F(x)$ on $\mathscr{D}$ can be designed to solve \eqref{eq:bestf0} with carefully chosen boundary nodes $\mathscr{B}=\mathscr{X} \subset \partial \mathscr{D}$. 
\end{itemize} 

Problem \eqref{eq:bestf0} therefore can describe  both a continuum approximation problem and, in computational practice, a discrete rational minimax problem. Passing from $\mathscr{D}$ to $\mathscr{L}$ or to a sampled set $\mathscr{X}\subseteq\mathscr{L}$ is natural because of the maximum Frobenius norm principle, but it is not automatic that the minimax approximants on these sets coincide. Even in the scalar-valued complex case, rational minimax approximants may depend strongly on the domain; examples in \cite{this:1993,regu:1983,zhan:2026} show that the minimax approximant on a planar domain can differ from the one on its boundary. Hence a discrete approximation method must be supported by conditions under which the computed discrete solution is also meaningful for the underlying continuum problem. A numerical method for solving \eqref{eq:bestf0} on finite point sets has been developed in \cite{zhzz:2025} and recently applied to nonlinear eigenvalue problems in \cite{zhgz:2026}.

The theoretical status of \eqref{eq:bestf0} is also quite different from that of linear  (or polynomial)  minimax approximation. In the scalar-valued case, optimality conditions for local or global rational minimax approximants are less complete than the corresponding theory for linear minimax approximations \cite{chen:1982,mein:1967,rice:1969,rish:1961,shap:1971,sing:1970}. Rational minimax approximation exhibits substantial domain dependence, and no universal necessary and sufficient condition is available for global minimax solutions on arbitrary sets \cite{gutk:1983,sing:2006}. Further distinctions arise between continuum domains and discrete point sets, and between real-valued and complex-valued approximation. The complex scalar-valued theory up to 1983 is surveyed in \cite{gutk:1983}; recent results for the scalar-valued setting, including boundary and discrete formulations, are developed in \cite{zhan:2026}. The matrix-valued problem \eqref{eq:bestf0}, with a Frobenius norm error and a common denominator shared by all entries, adds another layer of structure and appears to be a new problem requiring both approximation-theoretic foundations and practical numerical algorithms.

{\bf Contributions}. 
The purpose of this paper is to develop such a foundation for \eqref{eq:bestf0}. Our contributions are as follows.
\begin{enumerate}
\item[1)] For $F\in \mathfrak{C}(\mathscr{B})$ where $\mathscr{B}$ is dense in itself (i.e., it has no isolated points),  we establish existence of a matrix-valued rational minimax approximant \eqref{eq:bestf0}. This extends Walsh's foundational existence theorem for scalar rational approximation \cite{wals:1931,wals:1969} to matrix-valued functions with a common denominator.
\item[2)]   We derive optimality conditions for matrix-valued rational minimax approximation. In particular, we extend Kolmogorov-type criteria \cite{kolm:1948a} and Ruttan's sufficient global optimality condition \cite{rutt:1985,this:1993} to the matrix-valued setting.
\item[3)]  We analyze the relation among Kolmogorov's criteria, Ruttan's conditions, a dual-based framework \cite{zhzz:2025}, and the {\sf m-d-Lawson} iteration \cite{zhzz:2025}. For discrete approximation, we show how strong duality connects Ruttan's sufficient global optimality to the dual formulation introduced in \cite{zhzz:2025}, and how the optimality equations solved in the {\sf m-d-Lawson} method can be interpreted as Kolmogorov dual criteria.
\item[4)]  We clarify the continuum-to-discrete connection. For $F\in\mathfrak{C}_{A}(\mathscr{D})$, we use the maximum Frobenius norm principle and Ruttan's sufficient condition to identify when the minimax approximant on $\mathscr{D}$ is also characterized through the boundary $\mathscr{L}$ or through a finite set $\mathscr{X}\subseteq\mathscr{L}$ containing suitable extreme points.
\end{enumerate}

The rest of this paper is organized as follows. Section \ref{sec:existence} proves the existence result. The next two sections develop Kolmogorov (Section \ref{sec:kolmogorov}) and Ruttan optimality conditions  (Section \ref{sec:ruttan})  and discuss their implications for continuum, boundary, and discrete problems. Section \ref{sec:algorithm} studies the dual formulation and the {\sf m-d-Lawson} iteration for discrete rational minimax approximation \cite{zhzz:2025}, including their links to Ruttan's condition and Kolmogorov's criterion. Finally, concluding remarks are given in Section \ref{sec:conclude}.

{\bf Notation}.  The notation system of this paper primarily follows that of \cite{zhzz:2025}. We use $\mathbb{C}^{n \times m}$ ($\mathbb{R}^{n \times m}$) for complex (real) matrices, with $\bullet^{\HH}$ ($\bullet^{\T}$) denoting conjugate transpose (transpose). Scalars are Latin ($a$) or Greek ($\alpha$) letters; vectors are bold lowercase ($\ba$, $\bm{\alpha}$), and matrices are uppercase ($A$, $\bm{\Phi}$). Subscripts ($\bp_{ij}$, $F_{ij}$) further denote vectors/matrices, expressed as $\ba = [a_1, \dots, a_n]^{\T}$ or $A = [a_{ij}]$. The calligraphic symbol $\mathscr{B}$ refers to a set.
The identity matrix is $I_n = [\be_1, \dots, \be_n] \in \mathbb{R}^{n \times n}$, with $\be_i$ its $i$-th column. The all-ones vector is $\mathbf{1}_g \in \mathbb{R}^g$. Complex $\mu$ splits as $\mu = \mu^{\tt r} + {\tt i} \mu^{\tt i}$, where ${\tt i} = \sqrt{-1}$, with conjugate $\bar{\mu}$, real part $\mu^{\tt r}$, and modulus $|\mu|$. 
Diagonal matrices are $\diag(\bx) = \diag(x_1, \dots, x_n)$, and element-wise division is $\bx ./ \by = [x_1/y_1, \dots, x_n/y_n]$. The span of $A$ is ${\rm span}(A)$ and $\langle A,B\rangle =\mathrm{tr}(A^{\HH}B)$ with $\|A\|_{\F}=\sqrt{\langle A,A\rangle}$.


\section{The existence}\label{sec:existence}
For a scalar-valued continuous function $f\in \mathfrak{C}(\mathscr{B})$, i.e., $s=t=1$, Walsh's foundational result \cite{wals:1931,wals:1969} establishes the existence  
of a rational minimax approximant of \eqref{eq:bestf0} when $\mathscr{B}$  is dense in   itself (i.e., it has no isolated points). In this section, we shall extend this existence to general  $s$ and $t$. 
\begin{theorem}\label{thm:existence}
Let the matrix-valued continuous function $F(x)\in \mathfrak{C}(\mathscr{B})$  in \eqref{eq:Fx} be defined  on a point set\footnote{For the existence of $\hat R$ of \eqref{eq:bestf0}, the set $\mathscr{B}$ does not need to be closed.}  $\mathscr{B}\subseteq \bbC$ which is dense in   itself. Suppose there exists at least one rational function $R\in \scrR_{(s, t)}$ such that  $\|R(x)-F(x)\|_{\F}$ is bounded on $\mathscr{B}$.  Then the infimum $\eta_{\infty,\mathscr{B}}$ of  \eqref{eq:bestf0} is attainable by a  minimax approximant $\hat R\in \scrR_{(s,t)}$. 
\end{theorem}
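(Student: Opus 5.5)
We follow Walsh's compactness argument and adapt it to the shared denominator. Take a minimizing sequence $R^{(k)}=P^{(k)}/q^{(k)}\in\scrR_{(s,t)}$ with
\[
\sup_{x\in\mathscr{B}}\|F(x)-R^{(k)}(x)\|_{\F}^2\to\eta_{\infty,\mathscr{B}}<\infty .
\]
Finiteness follows from the hypothesis that some $R$ has bounded error. Since a rational function is unchanged when numerator and denominator are multiplied by the same nonzero scalar, we normalize each representative so that the coefficient vector of $(q^{(k)},p^{(k)}_{11},\dots,p^{(k)}_{st})$ has Euclidean norm $1$ (equivalently, the maximum coefficient modulus is $1$). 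These vectors lie in the unit sphere of a finite-dimensional space. Passing to a subsequence, the coefficients converge, so $q^{(k)}\to\hat q\in\bbP_d$ and $p^{(k)}_{ij}\to\hat p_{ij}\in\bbP_{n_{ij}}$ uniformly on compact sets. The limit coefficient vector is nonzero.

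The first key step is to show $\hat q\not\equiv 0$. Suppose instead that $\hat q\equiv 0$. Then some $\hat p_{ij}\not\equiv 0$. Fix any $x_0\in\mathscr{B}$ at which $\hat p_{ij}(x_0)\neq0$; all but finitely many points qualify, and $\mathscr{B}$ is infinite because it is dense in itself. Choose a neighborhood $U$ of $x_0$ where $|\hat p_{ij}|\ge c>0$. On $U$ we have $|q^{(k)}|\to 0$ uniformly, while $|p^{(k)}_{ij}|\ge c/2$ for large $k$. Hence $|r^{(k)}_{ij}|\to\infty$ uniformly on $U\cap\mathscr{B}$, except possibly at zeros of $q^{(k)}$, where $r^{(k)}_{ij}$ is already undefined or infinite. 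Since $F$ is continuous, hence bounded near $x_0$ (restrict to a compact neighborhood, or use continuity at $x_0$), the error $\|F-R^{(k)}\|_{\F}\ge |f_{ij}-r^{(k)}_{ij}|$ blows up. This contradicts the uniform bound on the errors. We must use that $U\cap\mathscr{B}$ is nonempty and contains points other than the finitely many zeros of $q^{(k)}$, which holds because $x_0$ is not isolated. So $\hat q\not\equiv0$.

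Define $\hat R=\hat P/\hat q$, reduced by cancelling common factors of $\hat q$ and all $\hat p_{ij}$ simultaneously. The degree bounds are preserved, so $\hat R\in\scrR_{(s,t)}$. Let $Z$ be the finite zero set of $\hat q$. For $x\in\mathscr{B}\setminus Z$ we have $R^{(k)}(x)\to\hat R(x)$, because $q^{(k)}(x)\to\hat q(x)\neq0$. Hence
\[
\|F(x)-\hat R(x)\|_{\F}^2=\lim_k\|F(x)-R^{(k)}(x)\|_{\F}^2\le\eta_{\infty,\mathscr{B}}.
\]

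The main obstacle is the remaining finitely many points $x^*\in Z\cap\mathscr{B}$, which is where density-in-itself is essential. Consider the function $g(x)=\|F(x)-\hat R(x)\|_{\F}^2$ on $\mathscr{B}\setminus Z$. It is bounded by $\eta_{\infty,\mathscr{B}}$. Each $x^*$ is an accumulation point of $\mathscr{B}\setminus Z$, since $x^*$ is non-isolated and $Z$ is finite. If $x^*$ were a pole of the reduced $\hat R$, some entry would tend to infinity as $x\to x^*$ within $\mathscr{B}\setminus Z$. Continuity of $F$ at $x^*$ would then make $g$ unbounded there, a contradiction. So after cancellation $\hat R$ has no pole at $x^*$ and is continuous there. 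Taking the limit along points of $\mathscr{B}\setminus Z$ converging to $x^*$ gives $g(x^*)\le\eta_{\infty,\mathscr{B}}$. (If the reduced denominator still vanishes at $x^*$ only through a removable common factor, the reduction step has already removed it.) Therefore $\sup_{\mathscr{B}}\|F-\hat R\|_{\F}^2\le\eta_{\infty,\mathscr{B}}$. The reverse inequality holds by the definition of the infimum, so $\hat R$ attains $\eta_{\infty,\mathscr{B}}$.

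Closedness of $\mathscr{B}$ is never used: only local boundedness of $F$ near non-isolated points, which follows from continuity.
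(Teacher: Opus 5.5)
Your proof is correct, and it takes a genuinely different route from the paper.

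The paper normalizes only the denominators $q^{(k)}$ (leading coefficient one) and applies Walsh's Theorem II entry by entry. This gives continuous convergence, off finitely many points, to irreducible limits $\pi_{ij}/\sigma_{ij}$. The paper then has to rebuild a common denominator $\hat q=\prod_{u}(x-\alpha_u)^{\varrho_u}$ from the finite limit points of the zeros of $q^{(k)}$. It also needs an argument-principle count on a large circle to check that the rescaled numerators $\pi_{ij}\prod_u(x-\alpha_u)^{\varrho_u-\mu_{u,ij}}$ still lie in $\mathbb{P}_{n_{ij}}$. Only its final step, a contradiction at a point of $\mathscr{B}$ using density in itself, parallels yours.

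You instead normalize the joint coefficient vector of $(q^{(k)},p^{(k)}_{11},\dots,p^{(k)}_{st})$ on the unit sphere. The limit is then taken in the closed coefficient space $\mathbb{P}_d\times\prod_{i,j}\mathbb{P}_{n_{ij}}$, so the shared denominator and all degree bounds survive automatically, with no tracking of zeros. The price is the two degeneracies that coefficient compactness creates: $\hat q\equiv 0$, and zeros of $\hat q$ on $\mathscr{B}$. You handle both correctly with density in itself:
\begin{itemize}
\item for $\hat q\equiv 0$, the blow-up of $r^{(k)}_{ij}$ at points of $\mathscr{B}$ near a non-isolated $x_0$ with $\hat p_{ij}(x_0)\ne 0$;
\item for zeros of $\hat q$, cancellation together with the bound $\|F-\hat R\|_{\F}^2\le\eta_{\infty,\mathscr{B}}$ on $\mathscr{B}\setminus Z$ near each $x^*\in Z\cap\mathscr{B}$.
\end{itemize}

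Your argument is shorter and self-contained, avoiding Walsh's compactness theorem and the argument principle. The paper's route yields the extra information that each entry converges continuously in the extended plane away from finitely many points, which existence does not require.
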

\begin{proof}
The proof follows the approach of \cite[Theorem III]{wals:1931}, with an additional argument to preserve the shared denominator. We present the argument for the general matrix-valued case, since the only extra step beyond the scalar proof is a finite diagonal extraction over the \(st\) entries.

First, as there is at least one rational function \(R\in \scrR_{(s,t)}\) such that \(\|R(x)-F(x)\|_{\F}\) is bounded on \(\mathscr{B}\), the infimum \(\eta_{\infty,\mathscr{B}}<\infty\). Hence there is a sequence
\[
R^{(k)}=\frac{P^{(k)}}{q^{(k)}}=[r_{ij}^{(k)}]_{i,j}\in \scrR_{(s,t)},\qquad
r_{ij}^{(k)}=\frac{p_{ij}^{(k)}}{q^{(k)}},
\]
where \(p_{ij}^{(k)}\in\mathbb P_{n_{ij}}\) and \(0\not\equiv q^{(k)}\in\mathbb P_d\). {Multiplying \(P^{(k)}\) and \(q^{(k)}\) by the same nonzero constant if necessary, we normalize each denominator \(q^{(k)}\) so that its leading nonzero coefficient is one; this does not change \(R^{(k)}\) or its approximation error.} The sequence is chosen so that
\begin{equation}\label{eq:seqinf}
\lim_{k\rightarrow \infty}\sup_{x\in \mathscr{B}}\|F(x)-R^{(k)}(x)\|_{\F}^2= \eta_{\infty,\mathscr{B}}.
\end{equation}
For each fixed entry \((i,j)\), the sequence \(\{r_{ij}^{(k)}\}\) is uniformly bounded at any finite set of \(2\max\{n_{ij},d\}+1\) points at which the errors in \eqref{eq:seqinf} are bounded. Hence, by Walsh's   theorem for rational functions \cite[Theorem II]{wals:1931}, and by applying a finite diagonal extraction over all entries, there is a subsequence, still indexed by \(k\), such that every \(r_{ij}^{(k)}\) converges continuously\footnote{A sequence converges continuously in a region if its convergence is uniform in any closed subregion.} in the extended plane \(\overline{\mathbb C}\), except possibly at most $\max\{n_{ij},d\}$ points, to an irreducible rational function
\[
r_{ij}(x)=\frac{\pi_{ij}(x)}{\sigma_{ij}(x)},\qquad \pi_{ij}\in\mathbb P_{n_{ij}},\quad \sigma_{ij}\in\mathbb P_d,
\]
where $\sigma_{ij}$ is monic. 
It remains to show that these limiting entries form an admissible matrix-valued rational function with one common denominator of degree at most \(d\), without increasing the prescribed numerator degrees, and that this limiting function attains the infimum \(\eta_{\infty,\mathscr B}\).

We recall  Walsh's argument   \cite[Theorem I]{wals:1931}.  {Passing to a further subsequence if necessary, the zeros of the normalized denominators \(q^{(k)}\), counted with multiplicity, have finite limit points \(\alpha_1,\ldots,\alpha_\ell\). Let \(\varrho_u\) be the number of zeros of \(q^{(k)}\), counted with multiplicity, that tend to \(\alpha_u\).} Then
$
\sum_{u=1}^{\ell}\varrho_u\le d.
$
Set
\[
\hat q(x)=\prod_{u=1}^{\ell}(x-\alpha_u)^{\varrho_u}.
\]
Indeed, this is a polynomial of degree \(\sum_{u=1}^{\ell}\varrho_u\le d\), with the convention that the empty product is \(1\).
{Since the poles of each \(r_{ij}^{(k)}\) are among the zeros of the common denominator \(q^{(k)}\), Walsh's argument for \cite[Theorem I]{wals:1931} implies that every pole of the irreducible limit \(r_{ij}=\pi_{ij}/\sigma_{ij}\) belongs to \(\{\alpha_1,\ldots,\alpha_\ell\}\).} For a fixed entry \((i,j)\), let \(\mu_{u,ij}\) be the multiplicity of \(\alpha_u\) as a pole of \(r_{ij}\); if \(\alpha_u\) is not a pole, set \(\mu_{u,ij}=0\). {By the argument-principle argument of Walsh \cite[p. 675]{wals:1931}, applied on a small circle around \(\alpha_u\), the pole multiplicity of the limit \(r_{ij}\) at \(\alpha_u\) cannot exceed the number \(\varrho_u\) of zeros of \(q^{(k)}\) tending to \(\alpha_u\). Hence \(\mu_{u,ij}\le \varrho_u\).}
 We claim that
\begin{equation}\label{eq:limit_common_num_degree}
\hat p_{ij}(x):=\pi_{ij}(x)\prod_{u=1}^{\ell}(x-\alpha_u)^{\varrho_u-\mu_{u,ij}}\in\mathbb P_{n_{ij}}.
\end{equation}
Indeed, choose a positively oriented circle \(\mathscr C_{ij}\) whose interior contains all finite zeros of \(\hat q\) and all zeros of \(\pi_{ij}\), while the circumference \(\mathscr C_{ij}\) itself contains no zero or pole of \(r_{ij}\) and no exceptional point of the convergence.
Such a circle exists because these zeros, poles, and exceptional points form a finite set.
For all sufficiently large \(k\), the number of zeros of \(q^{(k)}\) inside \(\mathscr C_{ij}\) is \(\sum_{u=1}^{\ell}\varrho_u\).  Since \(r_{ij}\) has no zero or pole on \(\mathscr C_{ij}\), the image curve \(r_{ij}(\mathscr C_{ij})\) has positive distance from the origin. The continuous convergence of \(r_{ij}^{(k)}\) to \(r_{ij}\) on \(\mathscr C_{ij}\) therefore implies that, for all sufficiently large \(k\), \(r_{ij}^{(k)}\) is also nonzero on \(\mathscr C_{ij}\) and the curves \(r_{ij}^{(k)}(\mathscr C_{ij})\) and \(r_{ij}(\mathscr C_{ij})\) are homotopic in \(\mathbb C\setminus\{0\}\). Hence their winding numbers about the origin are equal. By the argument principle, 
\[
{
{\rm zer}_{\mathscr C_{ij}}\!\left(p_{ij}^{(k)}\right)-\sum_{u=1}^{\ell}\varrho_u
=\deg(\pi_{ij})-\sum_{u=1}^{\ell}\mu_{u,ij},
}
\]
{where \({\rm zer}_{\mathscr C_{ij}}(p_{ij}^{(k)})\) denotes the number of zeros of \(p_{ij}^{(k)}\) inside \(\mathscr C_{ij}\), counted with multiplicity. Therefore}
\[
{
\deg(\pi_{ij})+\sum_{u=1}^{\ell}(\varrho_u-\mu_{u,ij})
={\rm zer}_{\mathscr C_{ij}}\!\left(p_{ij}^{(k)}\right)
\le \deg(p_{ij}^{(k)})\le n_{ij}.
}
\]
{This proves \eqref{eq:limit_common_num_degree}.}

Consequently,
\(
r_{ij}(x)=\frac{\hat p_{ij}(x)}{\hat q(x)},  \hat p_{ij}\in\mathbb P_{n_{ij}},
\)
for every \(1\le i\le s\), \(1\le j\le t\). Therefore
\(
\hat R(x):=\left[\frac{\hat p_{ij}(x)}{\hat q(x)}\right]_{i,j}\in \scrR_{(s,t)}.
\)

It remains to prove that \(\hat R\) attains the infimum. Following the last step of Walsh's proof \cite[p. 674]{wals:1931}, suppose, to the contrary, that
$
\sup_{x\in \mathscr{B}}\|F(x)-\hat R(x)\|_{\F}^2>\eta_{\infty,\mathscr{B}}.
$
Then there are \(\epsilon>0\) and \(x_0\in\mathscr B\) such that either \(\hat R\) has a nonremovable pole at \(x_0\), or, after removable continuation if \(x_0\) is a removable zero of \(\hat q\),
\[
\|F(x_0)-\hat R(x_0)\|_{\F}^2>\eta_{\infty,\mathscr B}+2\epsilon.
\]
Let \(\mathscr S\) be the finite set consisting of the exceptional points where the entrywise convergence may fail and the zeros of \(\hat q\). If \(x_0\) is not a nonremovable pole of \(\hat R\), then the continuity of \(F-\hat R\) at \(x_0\) gives a neighborhood \(\mathscr{U}\) of \(x_0\) such that
\(\|F(x)-\hat R(x)\|_{\F}^2>\eta_{\infty,\mathscr B}+\epsilon\)
for all \(x\in \mathscr{U}\cap\mathscr B\). If \(x_0\) is a nonremovable pole, the same strict inequality holds for all \(x\in (\mathscr{U}\setminus\{x_0\})\cap\mathscr B\) after taking \(\mathscr{U}\) sufficiently small, because \(\|F(x)-\hat R(x)\|_{\F}\to\infty\) as \(x\to x_0\). Since \(\mathscr B\) is dense in itself, this set contains infinitely many points of \(\mathscr B\); removing the finite set \(\mathscr S\), we can choose \(x_1\in\mathscr B\setminus\mathscr S\) such that
$
\|F(x_1)-\hat R(x_1)\|_{\F}^2>\eta_{\infty,\mathscr B}+\epsilon,
$
and the subsequence \(R^{(k)}(x_1)\) converges to \(\hat R(x_1)\). Hence, for all sufficiently large \(k\),
\[
\|F(x_1)-R^{(k)}(x_1)\|_{\F}^2>\eta_{\infty,\mathscr B}+\frac{\epsilon}{2} \Longrightarrow
\sup_{x\in\mathscr B}\|F(x)-R^{(k)}(x)\|_{\F}^2>\eta_{\infty,\mathscr B}+\frac{\epsilon}{2}
\]
for infinitely many \(k\). This contradicts \eqref{eq:seqinf}. Therefore \(\hat R\) is a minimax approximant, and the proof is complete.
\end{proof}

 \section{Kolmogorov criteria in primal and dual forms}\label{sec:kolmogorov}
Kolmogorov \cite{kolm:1948a} established the optimality condition for the polynomial minimax approximant of a continuous scalar-valued function by a variational argument resulting from local perturbation on the best approximant. The Kolmogorov conditions for the rational scalar-valued case were later   discussed in \cite[Theorem 3]{will:1979} (see also \cite[Theorem 1.2]{rutt:1985} and \cite{zhan:2026}). For the matrix-valued case in \eqref{eq:bestf0} for  $F:\mathbb{C}\to\mathbb{C}^{s\times t}$, we next extend the Kolmogorov criteria  to  characterize the local minimizer $R(x)=P(x)/q(x)\in\mathscr{R}_{(s,t)}$ of \eqref{eq:bestf0}. 

\subsection{The irreducible representation of $R(x)\in \mathscr{R}_{(s,t)}$}\label{subsec:irreducible}
As a reducible rational function may admit various representations, we should first specify a unique representation  to describe the associated optimality of a local minimizer. For this purpose, in the rest of our paper, for a given \({R}(x)= {P}(x)/ {q}(x)\in \mathscr{R}_{(s,t)}\), we assume its unique representation of \( {R(x)}\) obtained by canceling all common factors of the numerators \(\{ {p}_{ij}(x)\}\)  in $P(x)$ and denominator \( {q}(x)\); that is, the greatest common divisor of ${p}_{11}(x),\dots, {p}_{st}(x)$ and ${q}(x)$ is $1$. Thus, the representation \( {R}(x)= {P}(x)/ {q}(x)\)  satisfies  \[{\rm gcd}(P,q):={\rm gcd}\left( {p}_{11}(x),\dots, {p}_{st}(x), {q}(x)\right)=1;\] 
moreover, we define its {\it defect} as 
\begin{equation}\label{eq:tilde_nu_def}
   {\nu}(P,q):=\min\left\{\min_{1 \le i \le s,1 \le j \le t}\left\{n_{ij}-{\rm deg}\left( {p}_{ij}\right)\right\},d-{\rm deg}\left( {q}\right)\right\}.
\end{equation}

\begin{example}
Suppose \( {R}=\frac{1}{ {q}}[ {p}_{11}, {p}_{21}]^{\rm T}=\frac{1}{(x-1)^2(x-2)}[(x-1)^2x,(x-1)(x-2)]^{\rm T} \in \mathscr{R}_{(2,1)}\) with \(d=n_{11}=n_{21}=3\) in \eqref{eq:rats}. By canceling all common factors of \( {p}_{11}, {p}_{21}\) and \( {q}\), we obtain its unique representation 
  \begin{equation}\nonumber
     {R}=\frac{1}{ {q}}\begin{bmatrix}
       {p}_{11}\\  {p}_{21}
    \end{bmatrix}=\frac{1}{(x-1)(x-2)}\begin{bmatrix}(x-1)x \\ x-2 \end{bmatrix},
  \end{equation}
  with \( {\nu}(P,q)=1\) in \eqref{eq:tilde_nu_def}.
\end{example}

\subsection{The Kolmogorov primal criterion}
Suppose now \(\hat{R}=\hat{P}/\hat{q}=[\hat{p}_{ij}/\hat{q}] \in \mathscr{R}_{(s,t)}\) is a local minimizer of \eqref{eq:bestf0} for $F\in \mathfrak{C}(\mathscr{B})$ on the compact set $\mathscr{B}\subset \bbC$. It is easy to see that $\hat R$ is pole-free in $\mathscr{B}$ because otherwise the maximum approximation error 
\begin{equation}\label{eq:norm_B}
  \|F-\hat R\|_{\infty, \mathscr{B}}:=\sup_{x \in \mathscr{B}}\|F(x)-\hat R(x)\|_{\F}^2
\end{equation}
is infinite.  The following theorem describes the primal form of the Kolmogorov criterion, which generalizes the Kolmogorov condition for the scalar-valued functions \cite[Theorem 2.1]{rutt:1985}.

\begin{theorem}[Kolmogorov primal criterion for matrix-valued functions]\label{thm:kolmogorov_primal}
Let  $\mathscr{B}$ be a compact set of $\bbC$ and $\hat R=\hat P/\hat q\in\mathscr{R}_{(s,t)}$ be a local minimizer of \eqref{eq:bestf0} with \({\rm gcd}\left(\hat P,\hat{q}\right)=1\) for $F\in \mathfrak{C}(\mathscr{B})$. 
Then 
\begin{equation}\label{eq:matrixKolmogorov}
\max_{x\in\mathcal{E}(\hat R)}\mathrm{Re} \left\langle \hat E(x),h(x)\hat{P}(x)-\hat{q}(x)G(x)\right\rangle  \geq 0,\ \forall (G,h) \in \mathscr{P}_{(s,t)}\times\mathbb{P}_d,
\end{equation}
where 
\begin{equation}\label{eq:Ehat_def}
  \hat E(x)=\frac{(F(x)-\hat R(x))\hat q(x)}{  \overline{\hat q(x)}},
\end{equation}
$\mathcal{E}(\hat R)=\left\{x:\|F(x)-\hat R (x)\|_{\F}^2=\|F-\hat{R}\|_{\infty,\mathscr{B}},~x\in \mathscr{B} \right\}$ and $\langle A,B\rangle =\mathrm{tr}(A^{\HH}B)$.
\end{theorem}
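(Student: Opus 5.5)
We argue by contradiction, following the classical Kolmogorov perturbation argument. Suppose there is a pair $(G,h)\in\mathscr{P}_{(s,t)}\times\mathbb{P}_d$ with
\[
\max_{x\in\mathcal{E}(\hat R)}\mathrm{Re}\left\langle \hat E(x),h(x)\hat P(x)-\hat q(x)G(x)\right\rangle=-2\delta<0 .
\]
We will build the one-parameter family
\[
R_\lambda=\frac{\hat P+\lambda G}{\hat q+\lambda h}\in\mathscr{R}_{(s,t)},\qquad \lambda>0 \text{ small},
\]
which is admissible because $\deg(\hat p_{ij}+\lambda g_{ij})\le n_{ij}$ and $\deg(\hat q+\lambda h)\le d$. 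Since $\hat R$ is pole-free on the compact set $\mathscr{B}$, $\min_{\mathscr{B}}|\hat q|>0$. Hence $\hat q+\lambda h$ is zero-free on $\mathscr{B}$ for small $\lambda$, and $R_\lambda\to\hat R$ uniformly on $\mathscr{B}$. So $R_\lambda$ lies in any prescribed neighbourhood of $\hat R$. The goal is to show $\|F-R_\lambda\|_{\infty,\mathscr{B}}<\|F-\hat R\|_{\infty,\mathscr{B}}$ for small $\lambda$, contradicting local minimality. The coprimality assumption $\gcd(\hat P,\hat q)=1$ is only needed to fix the representation in which $\hat P,\hat q$, and hence $\hat E$, are defined.

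First we expand. A direct computation gives, uniformly on $\mathscr{B}$,
\[
R_\lambda-\hat R=\lambda\,\frac{\hat q G-h\hat P}{\hat q(\hat q+\lambda h)}=\lambda\frac{\hat qG-h\hat P}{\hat q^2}+O(\lambda^2).
\]
Writing $E=F-\hat R$, we then have
\[
\|F-R_\lambda\|_{\F}^2=\|E\|_{\F}^2+2\lambda\,\mathrm{Re}\Big\langle E,\frac{h\hat P-\hat q G}{\hat q^2}\Big\rangle+O(\lambda^2).
\]
Using $\hat E=E\hat q/\overline{\hat q}$, we get $\langle E,\cdot/\hat q^2\rangle=\langle \hat E,\cdot\rangle/|\hat q|^2$, because $\overline{\hat q}\,\big/\big(\hat q\,\hat q^2\big)\cdot\hat q^2\ldots$ reduces after checking the conjugation in the first slot of $\langle\cdot,\cdot\rangle$ to the factor $1/|\hat q|^2$. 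This is where the weight $\hat q/\overline{\hat q}$ in the definition of $\hat E$ comes from. Thus the first-order term equals
\[
\frac{2\lambda}{|\hat q(x)|^2}\,\mathrm{Re}\left\langle\hat E(x),h(x)\hat P(x)-\hat q(x)G(x)\right\rangle .
\]
Importantly, $|\hat q|^2$ is a positive function bounded above and below on $\mathscr{B}$, so it does not change signs.

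Next we use a standard compactness split. By continuity, the function $\phi(x)=\mathrm{Re}\langle\hat E,h\hat P-\hat qG\rangle$ satisfies $\phi<-\delta$ on an open neighbourhood $\mathcal{U}$ of the compact set $\mathcal{E}(\hat R)$ (relative to $\mathscr{B}$).

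On $\mathcal{U}\cap\mathscr{B}$, the first-order term is at most $-2\lambda\delta/\max|\hat q|^2$, while the remainder is $O(\lambda^2)$ uniformly. Hence $\|F-R_\lambda\|_{\F}^2<\|F-\hat R\|_{\infty,\mathscr{B}}$ there for small $\lambda$.

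On the compact set $\mathscr{B}\setminus\mathcal{U}$, we have $\|E\|_{\F}^2\le\|F-\hat R\|_{\infty,\mathscr{B}}-\gamma$ for some $\gamma>0$. The total perturbation is $O(\lambda)$ uniformly, so strict inequality persists for small $\lambda$.

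Taking the maximum over the compact set $\mathscr{B}$ gives the strict decrease and hence the contradiction.

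The main obstacle is the bookkeeping in the first-order expansion: getting the conjugations right so that the directional derivative is exactly a positive multiple of $\mathrm{Re}\langle\hat E,h\hat P-\hat qG\rangle$. A secondary point is making the $O(\lambda^2)$ remainder and the zero-freeness of $\hat q+\lambda h$ uniform on $\mathscr{B}$. Both follow from compactness and $\min_{\mathscr{B}}|\hat q|>0$.
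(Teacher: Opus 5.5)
Your proof is correct and is essentially the paper's argument. Both use the perturbation $R_\lambda=(\hat P+\lambda G)/(\hat q+\lambda h)$ and split $\mathscr{B}$ into a neighbourhood of $\mathcal{E}(\hat R)$ and its complement. The only difference is that the paper writes the exact identity $\|F-R_\lambda\|_{\F}^2-\|F-\hat R\|_{\F}^2=\bigl(2\lambda\kappa+\lambda^2\rho\bigr)/|\hat q+\lambda h|^2$, with $\kappa=\mathrm{Re}\langle\hat E,h\hat P-\hat qG\rangle$, where you use a first-order expansion with weight $1/|\hat q|^2$; your conjugation bookkeeping does check out, despite the garbled sentence.

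One small correction: coprimality is not merely a normalization of the representation. It is exactly what turns ``$\hat R$ is pole-free on $\mathscr{B}$'' into $\min_{\mathscr{B}}|\hat q|>0$, a fact your argument relies on. A reducible representation could have $\hat q$ vanishing at a point of $\mathscr{B}$ where $\hat E$ is then undefined.
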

\begin{proof}
  Following Ruttan's proof of \cite[Theorem 1.2]{rutt:1985}, we proceed by contradiction. Assume that there exists \((G,h)\in \mathscr{P}_{(s,t)}\times \mathbb{P}_{d}\) satisfying (note that $\mathcal{E}(\hat R)$ is compact)
  \begin{equation}\nonumber
    {\rm Re}\left\langle\hat{E}(x),h(x)\hat{P}(x)-\hat{q}(x)G(x)\right\rangle<0,\ \forall x \in {\cal E}(\hat{R}).
  \end{equation}
  Consider the difference \(\delta_{\lambda}(x)\) for sufficient small \(\lambda \in \mathbb{R}\) in the form:
  \begin{equation}\label{eq:difference_rat_kol}
    \delta_{\lambda}(x) := \|F(x)-R_{\lambda}(x)\|_{\rm F}^2 -\|F(x)-\hat{R}(x)\|_{\rm F}^2 = \frac{2\lambda \kappa(x)+\lambda^2\rho(x) }{|q_{\lambda}(x)|^2},
  \end{equation}
  where \(R_{\lambda}(x):=P_{\lambda}(x)/q_{\lambda}(x)\), \(P_{\lambda}(x):=\hat{P}(x)+\lambda G(x)\), \(q_{\lambda}(x):=\hat{q}(x)+\lambda h(x)\) and 
  \begin{equation}\label{eq:kappa_def}
    \kappa(x)={\rm Re}\left\langle\hat{E}(x),h(x)\hat{P}(x)-\hat{q}(x)G(x)\right\rangle,
  \end{equation}
  \begin{equation}\label{eq:rho_def}
    \rho(x)=\|F(x)h(x)-G(x)\|_{\rm F}^2-\|F(x)h(x)-\hat{R}(x)h(x)\|_{\rm F}^2.
  \end{equation}
  
With the compactness of \({\cal E}(\hat{R})\), there exist an open set \(\mathscr{U}\) satisfying \({\cal E}(\hat{R}) \subset \mathscr{U} \subset \mathscr{B}\) and an \(\epsilon >0\) such that \(\kappa(x)<-\epsilon\) on \(\mathscr{U}\). Since the first-order term \(\kappa(x)\) dominates \(\delta_{\lambda}(x)\) and \(q_{\lambda}(x)\) is nonzero on \(\mathscr{B}\) as $\lambda\rightarrow 0$, for any sufficiently small \(\lambda \in \mathbb{R}\),  the right side of \eqref{eq:difference_rat_kol} is negative on \(\mathscr{U}\), leading to 
$$\|F(x)-R_{\lambda}(x)\|_{\rm F}^2<\|F-\hat{R}\|_{\infty,\mathscr{B}},   ~\forall x \in \mathscr{U}.$$ On the other hand,   there exists a \(\zeta>0\) such that $$\forall x \in \mathscr{B}\setminus \mathscr{U},~ \|F(x)-\hat{R}(x)\|_{\rm F}^2<\|F-\hat{R}\|_{\infty,\mathscr{B}} -\zeta.$$ Therefore, from \eqref{eq:difference_rat_kol}, \(\|F(x)-R_{\lambda}(x)\|_{\rm F}^2<\|F-\hat{R}\|_{\infty,\mathscr{B}} -\zeta+O(\lambda)\), leading to \(\|F(x)-R_{\lambda}(x)\|_{\rm F}^2<\|F-\hat{R}\|_{\infty,\mathscr{B}}\) for \(x \in \mathscr{B}\setminus \mathscr{U}\) and all \(\lambda>0\) sufficiently small. This implies that $R_{\lambda}(x)$ locally achieves the smaller $\|F-{R}_\lambda\|_{\infty,\mathscr{B}}$ than $\|F-\hat{R}\|_{\infty,\mathscr{B}}$, a contradiction.  
\end{proof}
 
Since polynomial minimax approximation  for matrix-valued functions is a special case of rational approximation with \(d=0\) and \(\hat{q}\equiv1\), the matrix-valued Kolmogorov criteria for polynomial minimax approximation can be obtained from Theorem \ref{thm:kolmogorov_primal} straightforwardly.
\begin{corollary}\label{coro:poly_kolmogorov}
  Let \(\mathscr{B}\) be a compact set of \(\mathbb{C}\) and $$\hat{P} \in \arg \min_{P \in \mathscr{P}_{(s,t)}}\|F-P\|_{\infty,\mathscr{B}},$$ i.e. \(\hat{P}\) is a matrix-valued polynomial (local) minimax approximation to \(F \in \mathfrak{C}(\mathscr{B})\). Then 
  \begin{equation}\label{eq:poly_kol}
    \max_{z \in {\cal E}(\hat{P})} {\rm Re}\left\langle F(z)-\hat{P}(z),G(z)\right\rangle\ge 0,\ \ \forall G \in \mathscr{P}_{(s,t)},
  \end{equation}
  where \({\cal E}(\hat{P})=\left\{x:\|F(x)-\hat{P}(x)\|_{\rm F}^2=\|F-\hat{P}\|_{\infty,\mathscr{B}}, x\in \mathscr{B}\right\}\).
\end{corollary}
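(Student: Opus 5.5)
The plan is to obtain Corollary~\ref{coro:poly_kolmogorov} as the special case $d=0$ of Theorem~\ref{thm:kolmogorov_primal}. With $d=0$ the class $\mathscr{R}_{(s,t)}$ consists of matrix-valued polynomials $P/q$ with $q$ a nonzero constant, so $\mathscr{R}_{(s,t)}=\mathscr{P}_{(s,t)}$. Any (local) minimizer $\hat P$ of $\|F-P\|_{\infty,\mathscr{B}}$ over $\mathscr{P}_{(s,t)}$ is therefore a local minimizer of \eqref{eq:bestf0}.

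For its representation we take $\hat q\equiv 1$ and numerator $\hat P$. Then ${\rm gcd}(\hat P,\hat q)=1$ holds trivially because $\hat q$ is a nonzero constant, so the hypotheses of Theorem~\ref{thm:kolmogorov_primal} are satisfied. The scaling of $\hat q$ is irrelevant: for $\hat q\equiv c\neq0$ one simply rescales, and the inequality below is invariant under positive scaling.

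Next we specialize the quantities in \eqref{eq:matrixKolmogorov}. From \eqref{eq:Ehat_def} with $\hat q\equiv1$ we get $\hat E(x)=F(x)-\hat P(x)$, and clearly $\mathcal{E}(\hat R)=\mathcal{E}(\hat P)$. Since $\mathbb{P}_0$ is the set of complex constants, the admissible pairs are $(G,h)$ with $h\equiv c\in\mathbb{C}$. The criterion then reads
\[
\max_{x\in\mathcal{E}(\hat P)}{\rm Re}\left\langle F(x)-\hat P(x),\,c\hat P(x)-G(x)\right\rangle\ge0
\quad\text{for all } G\in\mathscr{P}_{(s,t)},\ c\in\mathbb{C}.
\]

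Finally we choose $c=0$, which gives $\max_{x\in\mathcal{E}(\hat P)}{\rm Re}\langle F-\hat P,-G\rangle\ge0$ for every $G\in\mathscr{P}_{(s,t)}$. Since $\mathscr{P}_{(s,t)}$ is a linear space, replacing $G$ by $-G$ yields exactly \eqref{eq:poly_kol}. Alternatively, taking $c\neq 0$ adds $c\hat P$ to $-G$, and $c\hat P-G$ still ranges over all of $\mathscr{P}_{(s,t)}$, so the conclusion is the same.

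The only point needing care is whether Theorem~\ref{thm:kolmogorov_primal} applies verbatim when $d=0$. Its proof perturbs $q_\lambda=\hat q+\lambda h$, which here is a constant that stays nonzero for small $\lambda$, so the argument goes through unchanged. We expect no real obstacle.
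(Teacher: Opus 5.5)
Your proposal is correct and follows the paper's route: the paper obtains this corollary directly from Theorem~\ref{thm:kolmogorov_primal} by specializing to $d=0$ and $\hat q\equiv 1$. Your explicit steps ($\hat E=F-\hat P$, then $h\equiv 0$, then replacing $G$ by $-G$ using linearity of $\mathscr{P}_{(s,t)}$) simply spell out the specialization that the paper calls straightforward.
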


\subsection{The Kolmogorov dual criterion}
In this section, we shall rely on Carath\'eodory lemma and Hahn-Banach separation theorem to establish the dual form of  the Kolmogorov condition. 
\begin{lemma}\label{lem:0_in_convA}
  Suppose \(\mathscr{B} \subset \mathbb{C}\) is a compact set and \(\hat{R}=\hat P/\hat q=[\hat{p}_{ij}/\hat{q}] \in \mathscr{R}_{(s,t)}\) is a local minimizer of \eqref{eq:bestf0} with \({\rm gcd}(\hat P,\hat{q})=1\). Then \(\pmb{0} \in {\rm conv}\left({\cal A}\right)\), where
  \begin{equation}\label{eq:calA_def}
    {\cal A}=\left\{\begin{bmatrix}
      \overline{\tau_{11}(x)}\hat{q}(x)\boldsymbol{\psi}_{11}(x) \\ \vdots \\ \overline{\tau_{st}(x)}\hat{q}(x)\boldsymbol{\psi}_{st}(x) \\ \sum\limits_{i=1}^{s}\sum\limits_{j=1}^{t}\overline{\tau_{ij}(x)}\hat{p}_{ij}(x)\boldsymbol{\phi}(x)
    \end{bmatrix}\in \mathbb{C}^{ \widetilde{n}}:x \in {\cal E}(\hat{R})\right\},
  \end{equation}
  \begin{equation}\label{eq:widetilde_N_def}
    \widetilde{n}=\sum\limits_{i=1}^{s}\sum\limits_{j=1}^{t}\left(n_{ij}+1\right)+d+1,
  \end{equation}
  \begin{equation}\label{eq:t_ij_def}
    \tau_{ij}(x)=\frac{f_{ij}(x)\hat{q}(x)-\hat{p}_{ij}(x)}{\overline{\hat{q}(x)}},\ \ 1 \le i \le s,1 \le j \le t,
  \end{equation}
  \begin{equation}\label{eq:Phi_Psi_def}
    \boldsymbol{\phi}(x)=[\phi_0(x),\dots,\phi_d(x)]^{\rm T},\ \ \boldsymbol{\psi}_{ij}(x)=[\psi_0(x),\dots,\psi_{n_{ij}}(x)]^{\rm T},\ \ 1 \le i \le s,1 \le j \le t.
  \end{equation}
\end{lemma}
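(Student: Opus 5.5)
We argue by contradiction, combining the Kolmogorov primal criterion (Theorem \ref{thm:kolmogorov_primal}) with a separation argument. Here $\psi_k,\phi_k$ are bases of the polynomial spaces; take the monomials $x^k$ for definiteness. Then any $G=[g_{ij}]\in\mathscr{P}_{(s,t)}$ and $h\in\mathbb{P}_d$ can be written as $g_{ij}(x)=\boldsymbol{\psi}_{ij}(x)^{\T}\bm{c}_{ij}$ and $h(x)=\boldsymbol{\phi}(x)^{\T}\bm{b}$. The coefficient vectors are $\bm{c}_{ij}\in\mathbb{C}^{n_{ij}+1}$ and $\bm{b}\in\mathbb{C}^{d+1}$. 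Stack them into one vector $\bm{w}\in\mathbb{C}^{\widetilde n}$.

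\textbf{Step 1: express the primal quantity as a pairing.} Note that $\hat E(x)=[\tau_{ij}(x)]$ by \eqref{eq:Ehat_def} and \eqref{eq:t_ij_def}. Hence
\[
\left\langle \hat E(x),h\hat P-\hat q G\right\rangle=\sum_{i,j}\overline{\tau_{ij}(x)}\big(h(x)\hat p_{ij}(x)-\hat q(x) g_{ij}(x)\big).
\]
For each $x\in\mathcal{E}(\hat R)$, let $\bm{a}(x)\in\mathcal{A}$ be the vector in \eqref{eq:calA_def}. Set $\widetilde{\bm w}=[-\bm c_{11}^{\T},\dots,-\bm c_{st}^{\T},\bm b^{\T}]^{\T}$. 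With this choice the pairing equals $\bm a(x)^{\T}\widetilde{\bm w}$, a bilinear form without conjugation. Therefore condition \eqref{eq:matrixKolmogorov} reads
\[
\max_{\bm a\in\mathcal A}\mathrm{Re}\,\bm a^{\T}\bm w\ge 0\quad\text{for all }\bm w\in\mathbb{C}^{\widetilde n}.
\]
This holds because the map $(G,h)\mapsto\widetilde{\bm w}$ is a bijection onto $\mathbb{C}^{\widetilde n}$.

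\textbf{Step 2: compactness and convexity.} The set $\mathcal{E}(\hat R)$ is compact, being a closed subset of the compact set $\mathscr{B}$ on which the error is continuous. Because $\hat q$ has no zeros on $\mathscr B$, the map $x\mapsto\bm a(x)$ is continuous there. Hence $\mathcal A$ is compact. We identify $\mathbb{C}^{\widetilde n}$ with $\mathbb{R}^{2\widetilde n}$. Carath\'eodory's lemma then shows that ${\rm conv}(\mathcal A)$ is compact: it is the continuous image of the compact set $\mathcal A^{2\widetilde n+1}\times$ simplex.

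\textbf{Step 3: separation.} Suppose $\bm 0\notin{\rm conv}(\mathcal A)$. The Hahn--Banach separation theorem in $\mathbb{R}^{2\widetilde n}$, applied to the point $\bm 0$ and the compact convex set ${\rm conv}(\mathcal A)$, yields a real linear functional that is strictly negative on ${\rm conv}(\mathcal A)$. Every real linear functional on $\mathbb{C}^{\widetilde n}$ has the form $\bm a\mapsto\mathrm{Re}\,\bm a^{\T}\bm w$ for some $\bm w\in\mathbb{C}^{\widetilde n}$. So there is a $\bm w$ with $\mathrm{Re}\,\bm a^{\T}\bm w<0$ for all $\bm a\in\mathcal A$. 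By compactness of $\mathcal A$, the maximum of this quantity over $\mathcal A$ is attained and is negative. Translating $\bm w$ back to $(G,h)$ gives a pair violating \eqref{eq:matrixKolmogorov}, which contradicts Theorem \ref{thm:kolmogorov_primal}.

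\textbf{Main obstacle.} The only delicate point is the bookkeeping of conjugations and signs in Step 1. The vector $\mathcal A$ contains $\overline{\tau_{ij}}$ but unconjugated $\hat q,\hat p_{ij}$ and basis functions. Consequently the correct pairing is the bilinear form $\bm a^{\T}\bm w$, not the Hermitian one, and the sign of $\bm c_{ij}$ must be flipped. Once this is set up consistently, the rest is the standard Carath\'eodory/separation argument.
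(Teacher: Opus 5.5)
Your proof is correct and is essentially the paper's argument: assume $\pmb{0}\notin{\rm conv}(\mathcal{A})$, strictly separate by Hahn--Banach, and read the separating functional as a pair $(G,h)$ that violates the primal criterion \eqref{eq:matrixKolmogorov}. Two of your details are improvements. Your Step 2 makes explicit the compactness of ${\rm conv}(\mathcal{A})$ that strict separation requires, which the paper leaves implicit. Your signs ($-\bm{c}_{ij}$ for the $G$-coefficients, $+\bm{b}$ for $h$) are the consistent ones: the paper's literal choice $G=[\pmb{a}_{ij}^{\HH}\boldsymbol{\psi}_{ij}]$, $h=-\pmb{b}^{\HH}\boldsymbol{\phi}$ gives $\mathrm{Re}\langle \hat E,h\hat P-\hat q G\rangle=-\mathrm{Re}(\pmb{c}^{\HH}\pmb{\xi})>0$ on ${\cal E}(\hat R)$, so both signs must be flipped exactly as you do.
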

\begin{proof}
We prove this lemma by contradiction. Assume that \(\pmb{0} \not\in {\rm conv}\left({\cal A}\right)\), then by Hahn-Banach separation theorem, there exist a \(\gamma \in \mathbb{R}\) and \(\pmb{c}=[\pmb{a}_{11}^{\rm T},\dots,\pmb{a}_{st}^{\rm T},\pmb{b}^{\rm T}]^{\rm T} \in \mathbb{C}^{ \widetilde{n}}\) with \(\pmb{a}_{ij} \in \mathbb{C}^{n_{ij}+1}\), \(\pmb{b} \in \mathbb{C}^{d+1}\) \(\forall 1 \le i \le s,\ 1 \le j \le t\), such that \({\rm Re}\left(\pmb{c}^{\rm H}\pmb{\xi}\right)<\gamma<{\rm Re}\left(\pmb{c}^{\rm H}\pmb{0}\right)=0\) for each \(\pmb{\xi} \in {\rm conv}\left({\cal A}\right)\). Then we have
  \begin{equation}\nonumber
    {\rm Re}\left(\sum\limits_{i=1}^{s}\sum\limits_{j=1}^{t}\overline{\tau_{ij}(x)}\left(\hat{q}(x)\pmb{a}_{ij}^{\rm H}\boldsymbol{\psi}_{ij}(x)+\hat{p}_{ij}(x)\pmb{b}^{\rm H}\boldsymbol{\phi}(x)\right)\right)<0,\ \ \forall x \in {\cal E}(\hat{R}).
  \end{equation}
Letting \(G(x)=[\pmb{a}_{ij}^{\rm H}\boldsymbol{\psi}_{ij}(x)] \in \mathscr{P}_{(s,t)}\) and \(h(x)=-\pmb{b}^{\HH}\boldsymbol{\phi}(x) \in \mathbb{P}_{d}\), we then construct a pair \((G,h) \in \mathscr{P}_{(s,t)}\times \mathbb{P}_d\) which contradicts the primal form of Kolmogorov's condition \eqref{eq:matrixKolmogorov}.
\end{proof}

\begin{lemma}\label{lem:dim_span_A}
  Under the assumption of Lemma \ref{lem:0_in_convA},  the set ${\cal A}$ in \eqref{eq:calA_def} is contained in a subspace of $\bbC^{\wtd  n}$ with dimension no larger than
  \begin{equation}\label{eq:dim_spanA}
 \mathring{n}:=\sum\limits_{i=1}^s\sum\limits_{j=1}^t\left(n_{ij}+1\right)+d-{\nu}(\hat P, \hat q).
  \end{equation}
\end{lemma}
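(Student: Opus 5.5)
The plan is to exhibit $\nu(\hat P,\hat q)+1$ linearly independent vectors $\bc\in\bbC^{\wtd n}$ with $\bc^{\HH}\pmb{\xi}=0$ for every $\pmb{\xi}\in{\cal A}$. Then ${\cal A}$ lies in the orthogonal complement of their span, which has dimension $\wtd n-(\nu(\hat P,\hat q)+1)$. By \eqref{eq:widetilde_N_def} and \eqref{eq:dim_spanA}, this number is exactly $\mathring{n}$.

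\emph{Step 1: reduce orthogonality to a polynomial identity.} As in the proof of Lemma \ref{lem:0_in_convA}, write $\bc=[\ba_{11}^{\T},\dots,\ba_{st}^{\T},\bb^{\T}]^{\T}$. Set $g_{ij}(x)=\ba_{ij}^{\HH}\pmb{\psi}_{ij}(x)\in\mathbb{P}_{n_{ij}}$ and $h(x)=\bb^{\HH}\pmb{\phi}(x)\in\mathbb{P}_d$. For the element of ${\cal A}$ at $x\in{\cal E}(\hat R)$ this gives
\[
\bc^{\HH}\pmb{\xi}(x)=\sum_{i=1}^s\sum_{j=1}^t\overline{\tau_{ij}(x)}\left(\hat q(x)g_{ij}(x)+\hat p_{ij}(x)h(x)\right).
\]
It therefore suffices to choose $(g_{ij},h)$ so that $\hat q\,g_{ij}+\hat p_{ij}h\equiv 0$ identically for every $(i,j)$. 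This makes the sum vanish for all $x$, regardless of $\tau_{ij}$.

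\emph{Step 2: the construction.} For $k=0,1,\dots,\nu(\hat P,\hat q)$ take
\[
g_{ij}^{(k)}(x)=x^k\hat p_{ij}(x),\qquad h^{(k)}(x)=-x^k\hat q(x).
\]
Then $\hat q\,g^{(k)}_{ij}+\hat p_{ij}h^{(k)}=x^k\hat q\hat p_{ij}-x^k\hat p_{ij}\hat q=0$. The degree constraints hold by the definition \eqref{eq:tilde_nu_def} of the defect. Indeed, $\deg(x^k\hat p_{ij})\le k+\deg\hat p_{ij}\le n_{ij}$ and $\deg(x^k\hat q)\le d$ whenever $k\le\nu(\hat P,\hat q)$. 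Since $\pmb{\psi}_{ij}$ and $\pmb{\phi}$ are bases of $\mathbb{P}_{n_{ij}}$ and $\mathbb{P}_d$, each such polynomial tuple corresponds to a unique coefficient vector $\bc^{(k)}$; the entries are conjugated coefficients, but that does not affect linearity or independence.

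\emph{Step 3: independence and conclusion.} The main point to verify, though it is easy, is that $\bc^{(0)},\dots,\bc^{(\nu)}$ are linearly independent. Suppose $\sum_k\alpha_k\bc^{(k)}=0$. Then its $\bb$-block gives $\sum_k\bar\alpha_k x^k\hat q(x)\equiv0$ in $\mathbb{P}_d$, i.e.\ $\big(\sum_k\bar\alpha_kx^k\big)\hat q\equiv0$. Since $\hat q\not\equiv0$, the polynomial $\sum_k\bar\alpha_kx^k$ is zero, so all $\alpha_k=0$. Hence ${\rm span}({\cal A})$ is orthogonal to a $(\nu+1)$-dimensional subspace, and so is contained in a subspace of dimension $\wtd n-\nu(\hat P,\hat q)-1=\mathring n$.

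Neither the local optimality of $\hat R$ nor the gcd condition is needed for this bound. The gcd normalization only matters in making $\nu(\hat P,\hat q)$ well defined and as large as possible.
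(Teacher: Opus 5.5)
Your proof is correct and follows the same route as the paper. Your vectors $\bc^{(k)}$ are exactly the images $\textswab{T}(x^k)$, $k=0,\dots,\nu(\hat P,\hat q)$, of the paper's linear map $\textswab{T}:\xi\mapsto\bigl(\overline{\bt(\xi\hat p_{ij},\mathbb{P}_{n_{ij}})},\,-\overline{\bt(\xi\hat q,\mathbb{P}_d)}\bigr)$, and the paper likewise concludes ${\cal A}\subseteq(\textswab{T}(\mathbb{P}_{\nu}))^{\perp}$; you additionally write out the orthogonality computation and the injectivity argument via $\hat q\not\equiv 0$, both of which the paper only asserts.
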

\begin{proof}
  By the definition of \(\nu={\nu}(\hat P, \hat q)\) in \eqref{eq:tilde_nu_def}, we know that \(\xi\hat{q} \in \mathbb{P}_d\) and \(\xi\hat{p}_{ij} \in \mathbb{P}_{n_{ij}}\) \(1 \le i \le s,\ 1 \le j \le t\) for each \(\xi \in \mathbb{P}_{ {\nu}}\). Fixing  a basis \(\mathbb{P}_{n_{ij}}={\rm span}\left(\psi_0(x),\dots,\psi_{n_{ij}}(x)\right)\), it is known that \(\xi\hat{p}_{ij}\) has a unique coordinate vector \(\ba_{ij} \in \mathbb{C}^{n_{ij}+1}\) with respect to this basis satisfying \(\xi\hat{p}_{ij}(x)=[\psi_0(x),\dots,\psi_{n_{ij}}(x)]\pmb{a}_{ij}\) for which we denote by \(\ba_{ij}=\bt\left(\xi\hat{p}_{ij},\mathbb{P}_{n_{ij}}\right)\). Similarly, we denote by \(\bt\left(\xi\hat{q},\mathbb{P}_d\right)\) the coordinate vector of \(\xi\hat{q}\) with respect to \(\left\{\phi_0(x),\dots,\phi_d(x)\right\}\).

With the linear mapping
  \begin{equation}\label{eq:Pv_C_iso_def}
    \textswab{T}:\xi \in \mathbb{P}_{ {\nu}} \longmapsto \begin{bmatrix}
      \overline{\bt\left(\xi\hat{p}_{11},\mathbb{P}_{n_{11}}\right)}\\ \vdots \\ \overline{\bt\left(\xi\hat{p}_{st},\mathbb{P}_{n_{st}}\right)} \\ -\overline{\bt\left(\xi\hat{q},\mathbb{P}_{d}\right)}
    \end{bmatrix} =:\textswab{T}(\xi)\in  \mathbb{C}^{ \widetilde{n}},
  \end{equation}
we have that \({\rm dim}\left(\textswab{T}\right)= {\nu}+1\). Therefore, we obtain the desired result \eqref{eq:dim_spanA} by directly verifying that \({\cal A}   \subseteq  \left(\textswab{T}\left(\mathbb{P}_{\nu}\right)\right)^{\bot}\).
\end{proof}

\begin{lemma}[Carath\'eodory]\label{lem:caratheodory}
    If \({\cal F} \subset \mathbb{R}^n\), then every point of the convex hull \({\rm conv}\left({\cal F}\right)=\left\{\sum\limits_{j}\omega_j\pmb{y}_j:\sum\limits_{j}\omega_j=1,\ \omega_j \ge 0,\ \pmb{y}_j \in {\cal F}\right\}\) can be written as a convex linear combination of at most \(n+1\) points of \({\cal F}\).
\end{lemma}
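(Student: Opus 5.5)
The plan is to prove Carath\'eodory's lemma by a minimal-representation argument combined with an affine-dependence reduction. Take $\bm{y}\in{\rm conv}(\mathcal F)$. By definition there is a \emph{finite} representation $\bm{y}=\sum_{j=1}^{k}\omega_j\bm{y}_j$ with $\omega_j\ge 0$, $\sum_j\omega_j=1$ and $\bm{y}_j\in\mathcal F$. Among all such representations, choose one with $k$ minimal. Minimality forces every $\omega_j>0$, since a zero weight could simply be dropped. Suppose, for contradiction, that $k\ge n+2$.

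The key step is to exhibit an affine dependence among the points. Because $k\ge n+2$, the $k-1\ge n+1$ vectors $\bm{y}_2-\bm{y}_1,\dots,\bm{y}_k-\bm{y}_1$ in $\mathbb R^n$ are linearly dependent. Hence there are reals $\beta_1,\dots,\beta_k$, not all zero, with
\[
\sum_{j=1}^k\beta_j\bm{y}_j=\bm{0},\qquad \sum_{j=1}^k\beta_j=0.
\]
To see this, take a nontrivial relation $\sum_{j\ge2}\beta_j(\bm{y}_j-\bm{y}_1)=\bm 0$ and set $\beta_1=-\sum_{j\ge2}\beta_j$. Since the $\beta_j$ sum to zero and are not all zero, at least one of them is strictly positive.

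Next, shift the weights along this dependence. For $t\ge 0$, put $\omega_j(t)=\omega_j-t\beta_j$. Then $\sum_j\omega_j(t)\bm{y}_j=\bm{y}$ and $\sum_j\omega_j(t)=1$ for every $t$. Choose
\[
t^*=\min\{\omega_j/\beta_j:\beta_j>0\}>0.
\]
With this choice, all $\omega_j(t^*)\ge 0$, and at least one of them vanishes. Deleting that term gives a convex representation of $\bm{y}$ using only $k-1$ points of $\mathcal F$, which contradicts the minimality of $k$. Therefore $k\le n+1$.

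No step here is a genuine obstacle; the lemma is classical. The only point needing care is the choice of $t^*$, which must keep every weight nonnegative while killing at least one of them; that is why the minimum is taken only over indices with $\beta_j>0$. When the lemma is applied later to the set $\mathcal A\subset\mathbb C^{\widetilde n}$, one identifies $\mathbb C^{\widetilde n}$ with $\mathbb R^{2\widetilde n}$, or better restricts to the real span of $\mathcal A$. Working in the subspace of Lemma \ref{lem:dim_span_A}, whose complex dimension is at most $\mathring n$ and so whose real dimension is at most $2\mathring n$, the same argument bounds the number of points by the real dimension of that span plus one, i.e.\ by $2\mathring n+1$. Since $\bm 0\in{\rm conv}(\mathcal A)$ by Lemma \ref{lem:0_in_convA}, this yields a representation of $\bm 0$ by finitely many extremal points.
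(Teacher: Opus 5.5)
Your proof is correct. It is the standard minimal-representation and affine-dependence argument for Carath\'eodory's theorem; the paper gives no proof of this lemma and simply quotes it as a classical fact.

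Your closing remark also matches how the paper uses the lemma in the proof of Theorem~\ref{thm:matrixKolmogorov_dual}. There ${\cal A}$ lies in a complex subspace of dimension at most $\mathring n$, hence real dimension at most $2\mathring n$. This gives at most $2\mathring n+1=2(\widetilde n-\nu)-1$ points, and your minimality step supplies the strictly positive weights $\omega_k$ that the theorem asserts.
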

With Lemmas \ref{lem:0_in_convA},   \ref{lem:dim_span_A} and   \ref{lem:caratheodory}, we obtain the dual form of the Kolmogorov condition:
\begin{theorem}[Kolmogorov dual criterion for matrix-valued functions]\label{thm:matrixKolmogorov_dual}
  Suppose \(\mathscr{B} \subset \mathbb{C}\) is compact and \(\hat{R}=\hat P/\hat q=[\hat{p}_{ij}/\hat{q}] \in \mathscr{R}_{(s,t)}\) is a local minimizer of \eqref{eq:bestf0} with \({\rm gcd} (\hat P,\hat{q} )=1\). Let \( \nu\) and \( \widetilde{n}\) be defined as in \eqref{eq:tilde_nu_def} and \eqref{eq:widetilde_N_def},  respectively. Then there exist \(r\ (1 \le r \le 2\left( \widetilde{n}- {\nu}\right)-1)\) extreme points \(z_1,\dots,z_r \in {\cal E}(\hat{R})\) and positive number \(\omega_1,\dots,\omega_r\) with \(\sum\limits_{k=1}^{r}\omega_k=1\) such that
  \begin{equation}\label{eq:matrixKolmogorovDual}
    \sum\limits_{k=1}^{r}\omega_k \left\langle\hat{E}(z_{k}),h(z_k)\hat{P}(z_k)-G(z_k)\hat{q}(z_k)\right\rangle=0,\ \ \forall (G,h) \in \mathscr{P}_{(s,t)}\times \mathbb{P}_d,
  \end{equation}
  where $\hat E(z)$ is given in \eqref{eq:Ehat_def}.
\end{theorem}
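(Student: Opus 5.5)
The plan is to combine Lemma \ref{lem:0_in_convA}, Lemma \ref{lem:dim_span_A} and the Carath\'eodory Lemma \ref{lem:caratheodory}, the latter applied over the reals. By Lemma \ref{lem:0_in_convA}, $\pmb{0}\in{\rm conv}(\mathcal{A})$. By Lemma \ref{lem:dim_span_A}, $\mathcal{A}$ lies in a complex subspace $\mathcal{S}=(\textswab{T}(\mathbb{P}_\nu))^{\bot}\subseteq\bbC^{\wtd n}$ of complex dimension at most $\mathring{n}=\wtd n-\nu-1$. Identifying $\mathcal{S}$ with a real vector space of dimension at most $2(\wtd n-\nu-1)$, Lemma \ref{lem:caratheodory} shows that $\pmb{0}$ is a convex combination of at most $2(\wtd n-\nu-1)+1=2(\wtd n-\nu)-1$ points of $\mathcal{A}$. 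Each such point has the form $\pmb{\alpha}(z_k)$ with $z_k\in\mathcal{E}(\hat R)$, where $\pmb{\alpha}(x)$ denotes the vector in \eqref{eq:calA_def}. Discarding zero weights and merging repeated points, we obtain $1\le r\le 2(\wtd n-\nu)-1$ distinct extreme points $z_k$ and positive weights $\omega_k$ summing to one with $\sum_k\omega_k\pmb{\alpha}(z_k)=\pmb{0}$. Here $r\ge1$ because $\mathcal{E}(\hat R)$ is nonempty by compactness of $\mathscr{B}$ and continuity of $\|F-\hat R\|_{\F}$ (recall $\hat R$ is pole-free on $\mathscr{B}$).

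Next we translate $\sum_k\omega_k\pmb{\alpha}(z_k)=\pmb{0}$ into \eqref{eq:matrixKolmogorovDual}. Take any $(G,h)\in\mathscr{P}_{(s,t)}\times\mathbb{P}_d$ and write $g_{ij}=\pmb{a}_{ij}^{\HH}\boldsymbol{\psi}_{ij}$ and $h=-\pmb{b}^{\HH}\boldsymbol{\phi}$, exactly as in the proof of Lemma \ref{lem:0_in_convA}. Taking the inner product of $\pmb{c}=[\pmb{a}_{11}^{\T},\dots,\pmb{a}_{st}^{\T},\pmb{b}^{\T}]^{\T}$ with the vanishing combination gives
\[
\sum_k\omega_k\sum_{i,j}\overline{\tau_{ij}(z_k)}\bigl(\hat q(z_k)g_{ij}(z_k)-\hat p_{ij}(z_k)h(z_k)\bigr)=0 .
\]
Since $\hat E(x)=[\tau_{ij}(x)]$ by \eqref{eq:Ehat_def} and \eqref{eq:t_ij_def}, the inner sum equals $\langle\hat E(z_k),\hat q(z_k)G(z_k)-h(z_k)\hat P(z_k)\rangle$, using $\langle A,B\rangle={\rm tr}(A^{\HH}B)=\sum_{i,j}\overline{a_{ij}}b_{ij}$. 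Multiplying by $-1$ yields \eqref{eq:matrixKolmogorovDual}. Because $(\pmb{a}_{ij},\pmb{b})$ range over all coefficient vectors, this holds for every $(G,h)$. This is an exact complex identity, not merely a statement about real parts, so no extra argument is needed.

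The main obstacle is the dimension count. Lemma \ref{lem:dim_span_A} gives the bound $\mathring n$ with a ``$+d-\nu$'' term, so $\mathring n=\wtd n-1-\nu$. The subspace is complex, so its real dimension doubles, and Carath\'eodory then gives $2\mathring n+1=2(\wtd n-\nu)-1$ points, matching the stated bound. I will check this arithmetic carefully. I will also verify the orthogonality claimed in Lemma \ref{lem:dim_span_A}, since the proof rests on it. The key computation is that for $\xi\in\mathbb{P}_\nu$ the combination $\xi\hat q\tau_{ij}$ against $\xi\hat p_{ij}$ cancels, because the substitution $G=\xi\hat P$, $h=\xi\hat q$ makes $h\hat P-\hat qG\equiv0$. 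This orthogonality is what lets us work in the reduced real space rather than all of $\bbC^{\wtd n}\cong\mathbb{R}^{2\wtd n}$.
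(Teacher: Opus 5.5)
Your proposal is correct and follows essentially the paper's own proof. You use $\pmb{0}\in{\rm conv}(\mathcal{A})$ from Lemma~\ref{lem:0_in_convA} and the real-dimension count $2\mathring n=2(\wtd n-\nu-1)$ from Lemma~\ref{lem:dim_span_A}, then apply Carath\'eodory to get at most $2(\wtd n-\nu)-1$ points, and finally pair the vanishing combination with coefficient vectors of $(G,h)$. The only difference is notational: you pair via $\pmb{c}^{\HH}$ with $g_{ij}=\pmb{a}_{ij}^{\HH}\boldsymbol{\psi}_{ij}$ and $h=-\pmb{b}^{\HH}\boldsymbol{\phi}$, whereas the paper applies $\pmb{a}_{ij}^{\T}$ and $\pmb{b}^{\T}$ to the block equations directly, and the two are equivalent.
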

\begin{proof}
  With the isomorphism \(\mathbb{C}^{ \widetilde{n}- {\nu}-1}\cong \mathbb{R}^{2\left( \widetilde{n}- {\nu}-1\right)}\) and  Lemma \ref{lem:dim_span_A}, we know that any point in \({\rm conv}\left({\cal A}\right)\) can be expressed as a convex combination of at most \[2 \mathring{n}+1=2\left( \widetilde{n}- {\nu}\right)-1\] points of \({\cal A}\). Therefore \(\pmb{0} \in {\rm conv}\left({\cal A}\right)\) implies there \( 1 \le r \le2 \mathring{n}+1\) extreme points \(\left\{z_k\right\}_{k=1}^{r} \subset {\cal E} (\hat{R} )\) and positive number \(\left\{\omega_k\right\}_{k=1}^r\) with \(\sum\limits_{k=1}^{r}\omega_k=1\) such that
  \begin{equation}\nonumber
    \sum\limits_{k=1}^{r}\omega_k\overline{\tau_{ij}(z_k)}\hat{q}(z_k)\bm{\psi}_{ij}(z_k)=\pmb{0} \in \mathbb{C}^{n_{ij}+1},\ \ 1 \le i \le s,1 \le j \le t,
  \end{equation}
  \begin{equation}\nonumber
    \sum\limits_{k=1}^{r}\omega_k\left(\sum\limits_{i=1}^{s}\sum\limits_{j=1}^{t}\overline{\tau_{ij}(z_k)}\hat{p}_{ij}(z_k)\bm{\phi}(z_k)\right)=\pmb{0} \in \mathbb{C}^{d+1},
  \end{equation}
  where \(\tau_{ij}(x)\) is defined in \eqref{eq:t_ij_def} and \(\hat{E}(x)=[\tau_{ij}(x)]\). Note that for any \(G(x)=[g_{ij}(x)] \in \mathscr{P}_{(s,t)}\) and \(h(x) \in \mathbb{P}_d\), each \(g_{ij}(x)\) and \(h(x)\) can be expressed as \(g_{ij}(x)=\pmb{a}_{ij}^{\rm T}\bm{\psi}_{ij}(x)\) and \(h(x)=\pmb{b}^{\rm T}\bm{\phi}(x)\), respectively, with \(\pmb{a}_{ij} \in \mathbb{C}^{n_{ij}+1}\) for all \(i,j\) and \(\pmb{b} \in \mathbb{C}^{d+1}\). Then we obtain the desired result.
\end{proof}
Theorem \ref{thm:matrixKolmogorov_dual} implies the special case of polynomial minimax approximation where $d = 0$ and $\hat{q} \equiv 1$. We state this explicitly as Corollary \ref{coro:poly_kolmogorov_dual}.
\begin{corollary}\label{coro:poly_kolmogorov_dual}
  Let \(\mathscr{B}\) be a compact set of \(\mathbb{C}\) and $$\hat{P} \in \arg\min_{P \in \mathscr{P}_{(s,t)}}\|F-P\|_{\infty,\mathscr{B}},$$ i.e. \(\hat{P}\) is a matrix-valued polynomial (local) minimax approximation to \(F \in \mathfrak{C}(\mathscr{B})\). Then there exist $1 \le r \le 2\left(\sum\limits_{i=1}^{s}\sum\limits_{j=1}^{t}\left(n_{ij}+1\right)\right)+1 $ extreme points \(z_1,\dots,z_r \in {\cal E}(\hat{P})\) and positive number \(\omega_1,\dots,\omega_r\) with \(\sum\limits_{k=1}^{r}\omega_k=1\) such that
  \begin{equation}\label{eq:poly_kol_dual}
    \sum\limits_{k=1}^{r}\omega_k\left\langle F(z_{k})-\hat{P}(z_k),G(z_k)\right\rangle=0,\ \ \forall G \in \mathscr{P}_{(s,t)}.
  \end{equation}
\end{corollary}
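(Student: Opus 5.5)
The plan is to specialize Theorem \ref{thm:matrixKolmogorov_dual} to $d=0$. With $d=0$ the admissible denominators are nonzero constants, so after normalization the candidate set is $\mathscr{P}_{(s,t)}$ and $\hat R=\hat P/\hat q$ with $\hat q\equiv 1$. Then \({\rm gcd}(\hat P,\hat q)=1\) holds trivially. A (local) minimizer $\hat P$ of $\|F-P\|_{\infty,\mathscr{B}}$ over $\mathscr{P}_{(s,t)}$ is therefore a local minimizer of \eqref{eq:bestf0} in this setting, and Theorem \ref{thm:matrixKolmogorov_dual} applies directly.

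Next I will identify the objects in the conclusion. Since $\hat q\equiv1$, \eqref{eq:Ehat_def} gives $\hat E(z)=F(z)-\hat P(z)$, and $\mathcal{E}(\hat R)=\mathcal{E}(\hat P)$. For $h\in\mathbb{P}_0$, i.e. a constant $h\equiv c$, the term $h\hat P-G\hat q$ becomes $c\hat P-G$. Taking $c=0$ in \eqref{eq:matrixKolmogorovDual} yields $\sum_k\omega_k\langle F(z_k)-\hat P(z_k),-G(z_k)\rangle=0$ for every $G\in\mathscr{P}_{(s,t)}$. Replacing $G$ by $-G$ then gives \eqref{eq:poly_kol_dual}. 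The constraint coming from $h$ is not needed for the corollary.

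For the count, Theorem \ref{thm:matrixKolmogorov_dual} gives $r\le 2(\widetilde n-\nu)-1$ with $\widetilde n=\sum_{i,j}(n_{ij}+1)+1$. Since $d-\deg\hat q=0$, we have $\nu=0$, so $r\le 2\sum_{i,j}(n_{ij}+1)+1$, which is exactly the stated bound.

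The only point needing care is this bound. I will not try to sharpen it through Lemma \ref{lem:dim_span_A}, since the stated (weaker) bound already matches the general theorem with $\nu=0$. The step that deserves explicit checking is therefore just the verification that $\nu=0$ when $d=0$.
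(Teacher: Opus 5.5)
Your proposal is correct and follows the paper's route: the paper obtains this corollary by specializing Theorem \ref{thm:matrixKolmogorov_dual} to $d=0$, $\hat q\equiv 1$. Your checks ($\hat E=F-\hat P$, taking $h\equiv 0$ and absorbing the sign into $G$, and $\nu=0$ so that $2(\widetilde n-\nu)-1=2\sum_{i,j}(n_{ij}+1)+1$) supply the details the paper leaves implicit.
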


\section{Ruttan's sufficient optimality for global approximants}\label{sec:ruttan}
In \cite{rutt:1985}, Ruttan established a sufficient condition to characterize global minimax approximants of continuous scalar-valued functions by proving the positive semidefiniteness of a carefully constructed matrix \cite[Theorem 2.1]{rutt:1985}. In this section, we generalize Ruttan's sufficient condition to the matrix-valued setting to identify the global minimizer $\hat R(x) = \hat P(x)/\hat q(x) \in \mathscr{R}_{(s,t)}$ for a continuous function $F: \mathscr{B}\subseteq\mathbb{C} \rightarrow \mathbb{C}^{s \times t}$ in \eqref{eq:bestf0}.

\subsection{Sufficient optimality for global approximants}
Given \(\hat{R}=\hat{P}/\hat{q}=[\hat{p}_{ij}/\hat{q}] \in \mathscr{R}_{(s,t)}\) with \({\rm gcd}(\hat P,\hat{q})=1\) and   \(\|\hat{e}\|_{\infty,\mathscr{B}}=\sup_{z \in \mathscr{B}}\|F(z)-\hat{R}(z)\|_{\rm F}^2\), we introduce the auxiliary function
\begin{equation}\label{eq:auxiliary_func}
  u(x;G,h)=\|F(x)h(x)-G(x)\|_{\rm F}^2-|h(x)|^2\|\hat{e}\|_{\infty,\mathscr{B}},
\end{equation}
defined for a pair \((G,h) \in \mathscr{P}_{(s,t)}\times \mathbb{P}_d\). It is noted that \(u(x;G,h)=\rho(x)\ \forall x \in {\cal E}(\hat{R})\), where \(\rho(x)\) is defined in \eqref{eq:rho_def}. With the polynomial bases \eqref{eq:Phi_Psi_def} of \(\mathbb{P}_d\) and \(\mathbb{P}_{n_{ij}},\ 1 \le i \le s,\ 1 \le j \le t\), letting \(G(x)=[g_{ij}(x)]\) and
\begin{equation}\nonumber
  h(x)=[\phi_0(x),\dots,\phi_d(x)]\pmb{h},\ \pmb{h} \in \mathbb{C}^{d+1},
\end{equation}
\begin{equation}\nonumber
  g_{ij}(x)=[\psi_0(x),\dots,\psi_{n_{ij}}(x)]\pmb{g}_{ij},\ \pmb{g}_{ij} \in \mathbb{C}^{n_{ij}+1},\ 1 \le i \le s,\ 1 \le j \le t,
\end{equation}
it can be directly verified that
\begin{equation}\label{eq:u_equality_with_H}
  u(x;G,h)=\begin{bmatrix}
    \pmb{g} \\ \pmb{h}
  \end{bmatrix}^{\rm H} H(x) \begin{bmatrix}
    \pmb{g} \\ \pmb{h}
  \end{bmatrix},
\end{equation}
where \(\pmb{g}=[\pmb{g}_{11}^{\rm T},\dots,\pmb{g}_{st}^{\rm T}]^{\rm T} \in \mathbb{C}^{n}\), \(n=\sum\limits_{i=1}^s \sum\limits_{j=1}^t \left(n_{ij}+1\right)\) and
\begin{equation}\label{eq:H_x_def}
  H(x)=\begin{bmatrix}
    H_1(x)&H_3(x) \\ H_3^{\rm H}(x)&H_2(x)
  \end{bmatrix} \in \mathbb{C}^{(n+d+1) \times (n+d+1)}
\end{equation} 
is a matrix-valued function with blocks 
\begin{subequations}\label{eq:H1_x_def}
  \begin{equation}
    H_1(x)=\begin{bmatrix}
      H^{(11)}_1(x) && \\ &\ddots& \\ &&H^{(st)}_1(x)
    \end{bmatrix} \in \mathbb{C}^{n\times n},
  \end{equation}
  \begin{equation}
    [H^{(ij)}_1(x)]_{\ell,u}=\overline{\psi_{\ell-1}(x)}\psi_{u-1}(x),\ \ 1 \le \ell,u \le n_{ij}+1,\ 1 \le i \le s,1\le j \le t,
  \end{equation}
\end{subequations}
\begin{equation}\label{eq:H2_x_def}
  [H_2(x)]_{\ell,u}=\left(\|F(x)\|_{\rm F}^2-\|\hat{e}\|_{\infty,\mathscr{B}}\right)\overline{\phi_{\ell-1}(x)}\phi_{u-1}(x) \in \mathbb{C},\ \ 1 \le \ell,u \le d+1,
\end{equation}
\begin{subequations}\label{eq:H3_x_def}
  \begin{equation}
    H_3(x)=\left[ \left(H^{(11)}_3(x)\right)^{\rm T},\dots,\left(H^{(st)}_3(x)\right)^{\rm T} \right]^{\rm T} \in \mathbb{C}^{n\times (d+1)},
  \end{equation}
  \begin{equation}
    [H^{(ij)}_3(x)]_{\ell,u}=-f_{ij}(x)\overline{\psi_{\ell-1}(x)}\phi_{u-1}(x),\ \ 1 \le \ell \le n_{ij}+1,\ 1 \le  u \le d+1,\ \forall i,j.
  \end{equation}
\end{subequations}
One can verify  that if \(\{\widetilde{\phi_k}\}_{k=0}^{d}\) and \(\{\widetilde{\psi_k}\}_{k=0}^{n_{ij}}\)    are new bases for \(\mathbb{P}_d\) and \(\mathbb{P}_{n_{ij}},\ 1 \le i \le s,\ 1 \le j \le t\), with the transformation matrices \(T_{(ij)} \in \mathbb{C}^{(n_{ij}+1) \times (n_{ij}+1)}\) and \(T_2 \in \mathbb{C}^{\left(d+1\right)\times \left(d+1\right)}\) satisfying
\begin{equation}\nonumber
  [\psi_0(x),\dots,\psi_{n_{ij}}(x)]=[\widetilde{\psi}_0(x),\dots,\widetilde{\psi}_{n_{ij}}(x)]T_{(ij)},\ 1 \le i \le s,\ 1 \le j \le t,
\end{equation}
\begin{equation}\nonumber
  [\phi_0(x),\dots,\phi_{d}(x)]=[\widetilde{\phi}_0(x),\dots,\widetilde{\phi}_{d}(x)]T_{2},
\end{equation}
then the new   \(\widetilde{H}(x)\) in \eqref{eq:H_x_def} associated with   \(\{\widetilde{\psi_k}\}_{k=0}^{n_{ij}}\) \(1 \le i \le s\ 1 \le j \le t\) and \(\{\widetilde{\phi_k}\}_{k=0}^{d}\) is 
\begin{equation}\nonumber
  \widetilde{H}(x)=\begin{bmatrix}
    T^{\rm H}_{(11)}&&&\\&\ddots&&\\&&T^{\rm H}_{(st)}&\\&&&T_2^{\rm H}
  \end{bmatrix}H(x)\begin{bmatrix}
    T_{(11)}&&&\\&\ddots&&\\&&T_{(st)}&\\&&&T_2
  \end{bmatrix}.
\end{equation}
This implies that the definiteness of \(H(x)\) is invariant under the choice of bases for \(\mathbb{P}_d\) and \(\mathbb{P}_{n_{ij}}\), \(1 \le i \le s,\ 1 \le j \le t\).

We now present Theorem \ref{thm:ruttan_sufficient},which can be seen as an extension of \cite[Theorem 2.1]{rutt:1985} to the matrix-valued case, for characterizing a global minimax approximant \(\hat{R} \in \mathscr{R}_{(s,t)}\).
\begin{theorem}\label{thm:ruttan_sufficient}
  Suppose \(\mathscr{B} \subset \mathbb{C}\) is a compact set and \(\hat{R}=\hat{P}/\hat{q} \in \mathscr{R}_{(s,t)}\) is an approximant of \(F\in \mathfrak{C}(\mathscr{B})\) with \({\rm gcd}(\hat P,\hat{q})=1\). If there exist extreme points \(\{z_{k}\}_{k=1}^{r} \subset {\cal E}(\hat{R})\), \(r\ge 1\) and positive real constants \(\{\omega_k\}_{k=1}^r\) such that \(\sum\limits_{k=1}^r\omega_k=1\) and the Hermitian matrix
  \begin{equation}\label{eq:ruttan_semipositive}
    \mathbf{H}=\sum\limits_{k=1}^r\omega_kH(z_{k})\succeq 0,
  \end{equation}
  where \(H(x)\) is given by \eqref{eq:H_x_def}. Then \(\hat{R}\) is a global minimax approximant for \eqref{eq:bestf0}. In that case,   \(\{z_k\}_{k=1}^{r}\) and \(\{\omega_k\}_{k=1}^r\) satisfy the local (dual form) Kolmogorov condition \eqref{eq:matrixKolmogorovDual}.
\end{theorem}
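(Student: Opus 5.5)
The plan is to argue by contradiction for global optimality, using the quadratic-form identity \eqref{eq:u_equality_with_H}, and then to read off the Kolmogorov dual condition from the kernel of $\mathbf{H}$. Write $\eta:=\|\hat e\|_{\infty,\mathscr{B}}$. Suppose some $R=G/h\in\mathscr{R}_{(s,t)}$ satisfies $\sup_{x\in\mathscr{B}}\|F(x)-R(x)\|_{\F}^2<\eta$. Then $h$ has no zeros on $\mathscr{B}$, since otherwise the error would be unbounded near that zero, or, at an isolated point where the numerators also vanish, we first cancel common factors. Hence for every $x\in\mathscr{B}$,
$$u(x;G,h)=|h(x)|^2\left(\|F(x)-R(x)\|_{\F}^2-\eta\right)<0 .$$
In particular $u(z_k;G,h)<0$ for each $k$. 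Summing with the positive weights $\omega_k$ and using \eqref{eq:u_equality_with_H} gives
$$[\pmb{g};\pmb{h}]^{\HH}\,\mathbf{H}\,[\pmb{g};\pmb{h}]=\sum_k\omega_k\,u(z_k;G,h)<0,$$
which contradicts $\mathbf{H}\succeq 0$. So $\hat R$ is a global minimax approximant. The only delicacy is the nonvanishing of $h$ on $\mathscr{B}$. If $h(z_k)=0$ for some $k$, then $u(z_k;G,h)=\|G(z_k)\|_{\F}^2\ge 0$, and finiteness of the error forces $G(z_k)=0$ as well. To avoid this case, reduce $R$ to its irreducible representation beforehand; this does not increase degrees, so the reduced pair is still admissible.

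For the second assertion, take $(G,h)=(\hat P,\hat q)$, i.e.\ the coefficient vector $\hat{\pmb{c}}=[\hat{\pmb{g}};\hat{\pmb{h}}]$. At every extreme point $z_k$,
$$u(z_k;\hat P,\hat q)=|\hat q(z_k)|^2\left(\|F(z_k)-\hat R(z_k)\|_{\F}^2-\eta\right)=0 .$$
Hence $\hat{\pmb{c}}^{\HH}\mathbf{H}\hat{\pmb{c}}=0$. Since $\mathbf{H}\succeq 0$, this yields $\mathbf{H}\hat{\pmb{c}}=\pmb{0}$, and therefore $\pmb{c}^{\HH}\mathbf{H}\hat{\pmb{c}}=0$ for every $\pmb{c}=[\pmb{g};\pmb{h}]$. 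The key step is to compute this bilinear form: it is the polarization of $u$,
$$\pmb{c}^{\HH}H(x)\hat{\pmb{c}}=\left\langle F(x)h(x)-G(x),\,F(x)\hat q(x)-\hat P(x)\right\rangle-\overline{h(x)}\,\hat q(x)\,\eta .$$
We then rewrite this expression at $x=z_k$ using $\hat E$.

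At $z_k$ we have $\|F-\hat R\|_{\F}^2=\eta$ and $F\hat q-\hat P=\hat E\,\overline{\hat q}$. Expanding $Fh-G=h(F-\hat R)+(h\hat R-G)$ gives
$$\bar h\,\hat q\,\|F-\hat R\|_{\F}^2+\left\langle h\hat R-G,\,\hat E\right\rangle\overline{\hat q}-\bar h\,\hat q\,\eta .$$
The first and third terms cancel. The remaining term equals $\langle h\hat P-\hat q G,\hat E\rangle\,\overline{\hat q}/\hat q$, and $|\overline{\hat q}/\hat q|=1$ at points where $\hat q\neq0$; this holds because $\hat R$ is pole-free on $\mathscr{B}$. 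So the remaining term is $\langle h\hat P-\hat qG,\hat E\rangle$ up to a unimodular factor. I expect this normalization to be the main obstacle. I would check carefully whether the phase $\overline{\hat q(z_k)}/\hat q(z_k)$ is absorbed exactly by the definition \eqref{eq:Ehat_def} of $\hat E$, which carries the factor $\hat q/\overline{\hat q}$. The computation should give $\sum_k\omega_k\,\overline{\langle\hat E(z_k),h(z_k)\hat P(z_k)-G(z_k)\hat q(z_k)\rangle}=0$, possibly after replacing $(G,h)$ by suitable conjugate-coefficient polynomials.

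Since this vanishes for all $(G,h)\in\mathscr{P}_{(s,t)}\times\mathbb{P}_d$, it is exactly \eqref{eq:matrixKolmogorovDual}, and the proof is complete.
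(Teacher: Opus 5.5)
Your proof takes the same route as the paper's. First, a contradiction via $\sum_k\omega_k u(z_k;\widetilde P,\tilde q)<0$, after cancelling removable common zeros of the competitor's representation. Second, $\hat{\pmb c}^{\rm H}\mathbf H\hat{\pmb c}=0\Rightarrow \mathbf H\hat{\pmb c}=\pmb 0$, followed by polarization.

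The first part is fine. The gap is in the last step. You claim that the remaining term $\overline{\hat q}\,\langle h\hat R-G,\hat E\rangle$ equals $\langle h\hat P-\hat qG,\hat E\rangle\,\overline{\hat q}/\hat q$, and then leave the resulting unimodular factor unresolved. That identity is false. It pulls $\hat q$ out of the first argument of $\langle A,B\rangle=\mathrm{tr}(A^{\rm H}B)$ as if the inner product were linear there, but it is conjugate-linear in $A$. So
\[
\langle h\hat P-\hat qG,\hat E\rangle=\langle \hat q\,(h\hat R-G),\hat E\rangle=\overline{\hat q}\,\langle h\hat R-G,\hat E\rangle ,
\]
which is exactly your remaining term, with no extra phase. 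Equivalently, $\hat E=(F-\hat R)\hat q/\overline{\hat q}$ at $z_k$. Hence both the $k$-th summand of $\pmb c^{\rm H}\mathbf H\hat{\pmb c}$ and $\overline{\langle \hat E(z_k),h(z_k)\hat P(z_k)-G(z_k)\hat q(z_k)\rangle}$ equal $\hat q(z_k)\,\mathrm{tr}\bigl((h\hat R-G)^{\rm H}(F-\hat R)\bigr)(z_k)$. This is precisely the identification the paper makes. The factor $\hat q/\overline{\hat q}$ in the definition of $\hat E$ is there exactly to cancel the $\overline{\hat q}$ that leaves the first slot.

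As submitted, your proposal does not prove the second assertion, and the fallback you suggest would not rescue it in general. A factor $\overline{\hat q(z_k)}/\hat q(z_k)$, if it were present, would depend on $k$. The vanishing of a weighted sum with $k$-dependent unimodular factors says nothing about the vanishing of the sum without them. Such a factor also cannot be absorbed by re-choosing $(G,h)$ within the fixed degree classes, for instance by passing to conjugate-coefficient polynomials. With the correction above, your argument is complete and coincides with the paper's proof.
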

\begin{proof}
  We first prove that \(\hat{R}\) is a global minimax approximant of \eqref{eq:bestf0} by contradiction. To see this, suppose, by contrast, that \(\hat{R}\) is not a global solution to \eqref{eq:bestf0}. Then either \(\eta_{\infty,\mathscr{B}}\) is not attainable or is attainable but \(\hat{R}\) is not a solution. In either case, it must  hold that \(\eta_{\infty,\mathscr{B}}<\|\hat{e}\|_{\infty,\mathscr{B}}\), and we can find \(\widetilde{R}=\widetilde{P}/\tilde{q}=[\tilde{p}_{ij}]/\tilde{q}\) with \(\tilde{q}(x) \not= 0\) \((\forall x \in \mathscr{B})\) satisfying
  \begin{equation}\label{eq:important_equality_rutt_theorem_proof}
    \eta_{\infty,\mathscr{B}} \le \|\tilde{e}\|_{\infty,\mathscr{B}}=\max_{z \in \mathscr{B}}\|F(z)-\widetilde{R}(z)\|_{\rm F}^2<\|\hat{e}\|_{\infty,\mathscr{B}}.
  \end{equation}
  {Here the denominator of \(\widetilde R\) may be taken nonzero on \(\mathscr B\): otherwise a nonremovable pole would make the left maximum infinite, while a removable zero can be cancelled from all entries and the common denominator.}
  Let \(\tilde{\pmb{g}}_{ij} \in \mathbb{C}^{n_{ij}+1}\) be the associated coefficient vectors of \(\tilde{p}_{ij}(x)\) in \(\{\psi_k(x)\}_{k=0}^{n_{ij}}\) \(\forall i,j\), and \(\tilde{\pmb{h}} \in \mathbb{C}^{d+1}\) be \(\tilde{q}(x)\)'s in \(\{\phi_k(x)\}_{k=0}^{d}\). Let \(\tilde{\pmb{g}}=[\tilde{\pmb{g}}_{11}^{\rm T},\dots,\tilde{\pmb{g}}_{st}^{\rm T}]^{\rm T}\). By \eqref{eq:ruttan_semipositive},  \eqref{eq:u_equality_with_H} and \eqref{eq:auxiliary_func}, it follows that
  \begin{align}\nonumber
    0 & \le \sum\limits_{k=1}^{r}\omega_k\begin{bmatrix}
      \tilde{\pmb{g}}\\ \tilde{\pmb{h}}
    \end{bmatrix}^{\rm H}H(z_k)\begin{bmatrix}
      \tilde{\pmb{g}}\\ \tilde{\pmb{h}}
    \end{bmatrix}=\sum\limits_{k=1}^{r}\omega_ku(z_k;\widetilde{P},\tilde{q}) \\ \nonumber &=\sum\limits_{k=1}^{r}\omega_k\left(\|F(z_k)\tilde{q}(z_k)-\widetilde{P}(z_k)\|_{\rm F}^2-|\tilde{q}(z_k)|^2\|\hat{e}\|_{\infty,\mathscr{B}}\right) \\\label{eq:quaradtic} &= \sum\limits_{k=1}^{r}\omega_k|\tilde{q}(z_k)|^2\left(\|F(z_k)-\widetilde{R}(z_k)\|_{\rm F}^2-\|\hat{e}\|_{\infty,\mathscr{B}}\right).
  \end{align}
  {However, \eqref{eq:important_equality_rutt_theorem_proof} gives
  \(\|F(z_k)-\widetilde R(z_k)\|_{\rm F}^2-\|\hat e\|_{\infty,\mathscr B}<0\)
  for every \(k\). Since \(\omega_k>0\) and \(\tilde q(z_k)\ne0\), the right-hand side of \eqref{eq:quaradtic} is strictly negative, contradicting \(\mathbf H\succeq0\).}
  Therefore, \(\hat{R} \in \arg\min_{R \in \mathscr{R}_{(s,t)}}\|F-R\|_{\infty,\mathscr{B}}\) is a global solution.

  {It remains to verify that the same points and weights satisfy the dual Kolmogorov condition. Let \(\hat{\pmb a}_{ij}\) and \(\hat{\pmb b}\) be the coefficient vectors of \(\hat p_{ij}\) and \(\hat q\), respectively, and set \(\hat{\pmb c}:=[\hat{\pmb a}^{\rm T},\hat{\pmb b}^{\rm T}]^{\rm T}\), where \(\hat{\pmb a}=[\hat{\pmb a}_{11}^{\rm T},\dots,\hat{\pmb a}_{st}^{\rm T}]^{\rm T}\). Since \(z_k\in{\cal E}(\hat R)\),}
  \[
  {
  \hat{\pmb c}^{\rm H}\mathbf H\hat{\pmb c}
  =\sum_{k=1}^r\omega_k|\hat q(z_k)|^2
  \left(\|F(z_k)-\hat R(z_k)\|_{\rm F}^2-\|\hat e\|_{\infty,\mathscr B}\right)=0.
  }
  \]
  {Because \(\mathbf H\succeq0\), the equality \(\hat{\pmb c}^{\rm H}\mathbf H\hat{\pmb c}=0\) implies \(\mathbf H\hat{\pmb c}=0\). Hence, for any \((G,h)\in\mathscr P_{(s,t)}\times\mathbb P_d\), with coefficient vector \(\pmb c_{G,h}:=[\pmb g^{\rm T},\pmb h^{\rm T}]^{\rm T}\), we have}
  \[
  {
  0=\pmb c_{G,h}^{\rm H}\mathbf H\hat{\pmb c}
  =\sum_{k=1}^{r}\omega_k
  \left(\left\langle F(z_k)h(z_k)-G(z_k),F(z_k)\hat q(z_k)-\hat P(z_k)\right\rangle
  -\overline{h(z_k)}\,\hat q(z_k)\|\hat e\|_{\infty,\mathscr B}\right).
  }
  \]
  {We now identify the \(k\)-th summand in the preceding display. Since \(z_k\in{\cal E}(\hat R)\), \(\|F(z_k)-\hat R(z_k)\|_{\rm F}^2=\|\hat e\|_{\infty,\mathscr B}\). Hence}
  \begin{align*}
  &\left\langle F(z_k)h(z_k)-G(z_k),F(z_k)\hat q(z_k)-\hat P(z_k)\right\rangle
  -\overline{h(z_k)}\,\hat q(z_k)\|\hat e\|_{\infty,\mathscr B} \\
  =& 
  \hat q(z_k)\left(\overline{h(z_k)}\left\langle F(z_k),F(z_k)-\hat R(z_k)\right\rangle
  -\left\langle G(z_k),F(z_k)-\hat R(z_k)\right\rangle\right.\\
  &\qquad\qquad\qquad\left.
  -\overline{h(z_k)}\left\langle F(z_k)-\hat R(z_k),F(z_k)-\hat R(z_k)\right\rangle\right) \\
  =& 
  \hat q(z_k)\left(\overline{h(z_k)}\left\langle \hat R(z_k),F(z_k)-\hat R(z_k)\right\rangle
  -\left\langle G(z_k),F(z_k)-\hat R(z_k)\right\rangle\right).
  \end{align*}
 On the other hand, by the definition \(\hat E(z_k)=(F(z_k)-\hat R(z_k))\hat q(z_k)/\overline{\hat q(z_k)}\) and \(\hat P(z_k)=\hat q(z_k)\hat R(z_k)\), it follows
  \begin{align*}
  &\overline{\left\langle \hat E(z_k),h(z_k)\hat P(z_k)-G(z_k)\hat q(z_k)\right\rangle}  
 =\left\langle h(z_k)\hat P(z_k)-G(z_k)\hat q(z_k),\hat E(z_k)\right\rangle \\
   =&\hat q(z_k)\left(\overline{h(z_k)}\left\langle \hat R(z_k),F(z_k)-\hat R(z_k)\right\rangle
  -\left\langle G(z_k),F(z_k)-\hat R(z_k)\right\rangle\right).
  \end{align*}
 Therefore the \(k\)-th summand is 
  $
  \overline{
  \left\langle \hat E(z_k),h(z_k)\hat P(z_k)-G(z_k)\hat q(z_k)\right\rangle .}
  $
Taking complex conjugates yields \eqref{eq:matrixKolmogorovDual}.
\end{proof}

\subsection{Ruttan's sufficient optimality for  continuum and discrete minimax approximations}\label{subsec:boundcont}
In the previous discussion, we considered  $\mathscr{B}$ as a general compact subset of $\mathbb{C}$. In many practical applications,  one may pursue minimax rational approximants $R(x)$ for matrix-valued functions $F(x)$ defined on a continuum   $\mathscr{B}=\mathscr{D}$ bounded by a simple Jordan curve $\mathscr{L}=\partial\mathscr{D}$. Such approximation problems commonly arise in the study of 
the linear time-invariant system (e.g., \cite{begu:2017,gogu:2021}), frequency-domain multiport rational modeling (see e.g., \cite{demd:2009,gogu:2021,gust:2006,gust:2009,guse:1999}),  computer-aided design of microwave duplexers \cite{trai:2010,trms:2007,zhzy:2025} and nonlinear eigenvalue problems \cite{guti:2017,gupt:2022,limp:2022,saem:2020,zhgz:2026}. The development of efficient approximation methods for such problems remains an active area of research due to their broad relevance in both theoretical and applied contexts.

For the continuum scenarios, we assume $F \in \mathfrak{C}_{A}(\mathscr{D})$, i.e., each entry $f_{ij}(x)$ is analytic in its interior and continuous on the boundary $\mathscr{L}=\partial\mathscr{D}$. As  mentioned in Section \ref{sec_intro} that for any  $R(x)$ with poles outside $\mathscr{D}$, the maximum Frobenius norm principle holds:
$$\max_{x \in \mathscr{D}} \|F(x) - R(x)\|_{\text{F}}^2=\max_{x \in \partial\mathscr{D}} \|F(x) - R(x)\|_{\text{F}}^2  $$
is attained on   $\mathscr{L}=\partial \mathscr{D}$  (see Theorem 1 in \cite{cond:2020}). 
Thus, solving problem \eqref{eq:bestf0} with carefully selected boundary nodes $\mathscr{X} \subset \partial \mathscr{D}$ yields an efficient numerical method for computing minimax rational approximants on $\mathscr{D}$.

To make the following presentation clear, we denote 
\begin{align}\label{eq:Ropts}
R_{\mathscr{D}}&\in \arg\min_{R\in \mathscr{R}_{(s,t)}}\max_{x \in \mathscr{D}} \|F(x) - R(x)\|_{\text{F}}^2,~R_{\mathscr{L}}\in \arg\min_{R\in \mathscr{R}_{(s,t)}}\max_{x \in \mathscr{L}} \|F(x) - R(x)\|_{\text{F}}^2, \\
R_\mathscr{X}&\in \arg\inf_{R\in \mathscr{R}_{(s,t)}}\max_{x \in \mathscr{X}} \|F(x) - R(x)\|_{\text{F}}^2.
\end{align}
Theorem \ref{thm:existence} ensures the existence of $R_{\mathscr{D}}$ and $R_{\mathscr{L}}$ but cannot be applied to the discrete case $R_{\mathscr{X}}$. Moreover, as  pointed out in \cite{this:1993,regu:1983,zhan:2026} that for the scalar-valued case $s=t=1$, $R_{\mathscr{L}}$ may not be the same $R_{\mathscr{D}}$. Indeed, even for the unit disk $\mathscr{D}= \{x \in \mathbb{C} : |x| \leq 1\}$, we can use the example (see \cite{this:1993,regu:1983} and \cite[Section 5]{zhan:2026}) to construct $F(x)=[x^3+x, x^3+x]^{\T}: \mathscr{D}\rightarrow \bbC^2$ with   $n_{11}=n_{21}=0, d=1$ so that 
$$
\inf_{R\in \mathscr{R}_{(2,1)}}\max_{x \in \mathscr{X}} \|F(x) - R(x)\|_{\text{F}}^2\le \|F - R_{\mathscr{L}}\|_{\infty, \mathscr{L}}\approx  6.1966 <\|F - R_{\mathscr{D}}\|_{\infty, \mathscr{D}}\approx 7.4298
$$
where $R_{\mathscr{D}}\approx\left[\frac{0.2993-{\tt i}\cdot 0.0682}{x-1.1194  e^{{\tt i}\cdot 1.1490}},\frac{0.2993-{\tt i}\cdot 0.0682}{x-1.1194  e^{{\tt i}\cdot 1.1490}}\right]^{\T}$ and $R_{\mathscr{L}}=\left[\frac{2/(\sqrt{33}+1)}{x},\frac{2/(\sqrt{33}+1)}{x}\right]^{\T}$.

To determine whether the rational minimax approximation on   the boundary $\mathscr{L}$ or its discrete subset $\mathscr{X} \subseteq \mathscr{L}$ yields an accurate or even exact solution $R_\mathscr{D}$ on $\mathscr{D}$, we derive a sufficient condition based on Ruttan’s global optimality criterion. This is the extension of \cite[Theorem 5.1]{zhan:2026}.

\begin{theorem}\label{thm:sufXtoD}
Let $\mathscr{D} \subset \mathbb{C}$ be a Jordan domain bounded by a   Jordan curve $\mathscr{L} = \partial\mathscr{D}$, and consider $F\in \mathfrak{C}_{A}(\mathscr{D})\setminus \scrR_{(s,t)}$. Assume that the minimax irreducible approximant $R_{\mathscr{D}} \in \scrR_{(s,t)}$  of \eqref{eq:bestf0} satisfies Ruttan's sufficient global optimality \eqref{eq:ruttan_semipositive} with $\mathscr{B}=\mathscr{D}$ {for a certificate \(\{z_k\}_{k=1}^r\subset {\cal E}(R_{\mathscr D})\) and weights \(\{\omega_k\}_{k=1}^r\)}. Then we have 
\begin{itemize}
\item[(i)] $R_{\mathscr{D}}\in \scrR_{(s,t)}$ is also a minimax approximant of $F$ on the boundary $\mathscr{L}$;
\item[(ii)] if $\mathscr{X}=\{x_j\}_{j=1}^m\subset \mathscr{L}$ contains {the certificate points} $\{z_j\}_{j=1}^r\subseteq {\cal E}(R_{\mathscr{D}})\subset \mathscr{L}$ associated with $R_{\mathscr{D}}$ so that Ruttan's sufficient global optimality \eqref{eq:ruttan_semipositive} holds, then $R_{\mathscr{D}}\in \scrR_{(s,t)}$ is also a   minimax approximant of $F$ on $\mathscr{X}$.
\end{itemize}
\end{theorem}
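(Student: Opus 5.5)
The plan is to reduce both claims to Theorem \ref{thm:ruttan_sufficient}, applied with a different compact set each time and with the same certificate $\{z_k\}_{k=1}^r$, $\{\omega_k\}_{k=1}^r$. Ruttan's matrix $\mathbf{H}=\sum_k\omega_k H(z_k)$ depends on the underlying set only through the scalar $\|\hat e\|_{\infty,\mathscr{B}}$ that appears in $H_2$ in \eqref{eq:H2_x_def}. It also depends on the requirement that the $z_k$ be extreme points for that set. So the main task is to show that these two ingredients are unchanged when we pass from $\mathscr{D}$ to $\mathscr{L}$, and then to $\mathscr{X}$.

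\textbf{Step 1: the error level is the same on $\mathscr{D}$ and $\mathscr{L}$.} Since $F\notin\scrR_{(s,t)}$, $R_{\mathscr{D}}$ attains a finite minimax error, so it has no poles in $\mathscr{D}$, as noted before Theorem \ref{thm:kolmogorov_primal}. Therefore each entry of $F-R_{\mathscr{D}}$ lies in $\mathfrak{C}_A(\mathscr{D})$. The maximum Frobenius norm principle \cite[Theorem 1]{cond:2020} then gives
\[
\|F-R_{\mathscr{D}}\|_{\infty,\mathscr{D}}=\|F-R_{\mathscr{D}}\|_{\infty,\mathscr{L}}=:\hat\eta .
\]
Because the maximum over $\mathscr{D}$ is attained on $\mathscr{L}$, the extreme set ${\cal E}(R_{\mathscr{D}})$, taken relative to $\mathscr{D}$, meets $\mathscr{L}$. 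I expect the main obstacle here: interior points of $\mathscr{D}$ could also be extreme points, and the certificate points $z_k$ must lie on $\mathscr{L}$. To rule this out, argue as follows. If $\|E(x_0)\|_{\F}^2=\hat\eta$ at an interior point $x_0$, then the subharmonic function $\|E(x)\|_{\F}^2=\sum_{i,j}|e_{ij}(x)|^2$ attains an interior maximum and is therefore constant. Each $|e_{ij}|^2$ is then harmonic as well as subharmonic, and $\Delta|e_{ij}|^2=4|e_{ij}'|^2$ forces every $e_{ij}$ to be constant. Hence $F$ is in $\scrR_{(s,t)}$ (a constant shift of $R_{\mathscr{D}}$, admissible since constants lie in each $\mathbb{P}_{n_{ij}}$), which contradicts the hypothesis. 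So ${\cal E}(R_{\mathscr{D}})\subset\mathscr{L}$, which justifies the inclusion stated in (ii). Consequently the extreme sets relative to $\mathscr{D}$ and to $\mathscr{L}$ coincide.

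\textbf{Step 2: proof of (i).} With $\mathscr{B}=\mathscr{L}$ (compact), the matrices $H(z_k)$ built from $\|\hat e\|_{\infty,\mathscr{L}}=\hat\eta$ are identical to those built with $\mathscr{B}=\mathscr{D}$. Each $z_k$ lies in ${\cal E}(R_{\mathscr{D}})$ relative to $\mathscr{L}$, and $\mathbf{H}\succeq0$ still holds. The representation keeps $\gcd=1$ because it is the same function. Theorem \ref{thm:ruttan_sufficient} therefore shows that $R_{\mathscr{D}}$ is a global minimax approximant on $\mathscr{L}$.

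\textbf{Step 3: proof of (ii).} Take $\mathscr{B}=\mathscr{X}$, which is finite and hence compact. Since $\{z_k\}\subseteq\mathscr{X}\subset\mathscr{L}$, we have
\[
\hat\eta=\max_k\|F(z_k)-R_{\mathscr{D}}(z_k)\|_{\F}^2\le\|F-R_{\mathscr{D}}\|_{\infty,\mathscr{X}}\le\hat\eta .
\]
So the error level is again $\hat\eta$, each $z_k$ is an extreme point relative to $\mathscr{X}$, and $\mathbf{H}$ is unchanged and positive semidefinite. Theorem \ref{thm:ruttan_sufficient} then shows that $R_{\mathscr{D}}$ attains the infimum over $\mathscr{X}$. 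Its proof handles non-attainment by contradiction, so existence on the discrete set needs no separate argument.
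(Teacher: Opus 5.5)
Your route is the paper's: show that the error level and the certificate points do not change when passing from $\mathscr D$ to $\mathscr L$ to $\mathscr X$, then apply Theorem~\ref{thm:ruttan_sufficient} twice. Step~3, which proves (ii), is fine as written. Its hypothesis already places the $z_k$ in $\mathscr X\subset\mathscr L$, and you only use $\max_{\mathscr X}\le\max_{\mathscr D}$.

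The gap is in Step~1, and (i) depends on it. Your subharmonicity argument correctly shows that an interior extreme point forces $F-R_{\mathscr D}\equiv C$ to be constant. The conclusion $F=R_{\mathscr D}+C\in\mathscr{R}_{(s,t)}$ is false in general, however. The $(i,j)$ entry of $R_{\mathscr D}+C$ is $(\hat p_{ij}+c_{ij}\hat q)/\hat q$. Since $\gcd(\hat P+C\hat q,\hat q)=\gcd(\hat P,\hat q)=1$, this is the reduced form of $F$. So $F\in\mathscr{R}_{(s,t)}$ only if $\deg(\hat p_{ij}+c_{ij}\hat q)\le n_{ij}$ for all $i,j$. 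That fails as soon as $c_{ij}\ne0$ and $\deg\hat q>n_{ij}$, which is possible whenever $d>n_{ij}$. Constants lying in $\mathbb P_{n_{ij}}$ is not enough.

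This is not a technicality. Take $s=t=1$, $n_{11}=0$, $d=1$, $F(x)=1+1/x$ and $\mathscr D=\{x:\tfrac12\le|x|\le2,\ |\arg x|\le 5\pi/6\}$.
\begin{itemize}
\item $F=(x+1)/x\notin\mathscr{R}_{(s,t)}$, yet $R=1/x$ gives $F-R\equiv1$, so ${\cal E}(R)=\mathscr D$.
\item Take the interior points $z_k=e^{2\pi{\tt i}k/3}$, $k=0,1,2$, with $\omega_k=1/3$. Use $z_k^{-1}=z_k^2$ and the discrete orthogonality of $1,z,z^2$ on the cube roots of unity. Then, for all $g\in\mathbb P_0$ and $h=h_0+h_1x\in\mathbb P_1$,
\[
\frac13\sum_{k=0}^{2}\Big(|F(z_k)h(z_k)-g|^2-|h(z_k)|^2\Big)=|h_0+h_1-g|^2\ge0 .
\]
\item Hence Ruttan's condition holds on $\mathscr D$, so $R_{\mathscr D}=1/x$. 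The certificate consists entirely of interior points, and ${\cal E}(R_{\mathscr D})\not\subseteq\mathscr L$.
\end{itemize}
Your Step~2 then has nothing to transfer, because the $z_k$ are not points of $\mathscr L$. Claim~(i) happens to survive in this example: the boundary points $\tfrac12e^{2\pi{\tt i}k/3}$ give the form $|h_0+h_1-g|^2+3|h_0|^2\ge0$. Your argument does not prove it, though.

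The paper's proof asserts ${\cal E}(R_{\mathscr D})\subseteq\mathscr L$ from the same hypothesis with no further argument. So this gap is inherited from the paper, and your explicit justification exposes it rather than closes it. A repair needs an extra assumption, either of the following:
\begin{itemize}
\item the certificate points lie on $\mathscr L$, as (ii) already assumes; or
\item $\deg\hat q\le n_{ij}$ for all $i,j$ (e.g.\ $d\le\min_{i,j}n_{ij}$), under which your constant-shift argument is valid.
\end{itemize}
A minor point: finiteness of the minimax error comes from $R\equiv0$ being admissible and $F$ being bounded on the compact set $\mathscr D$, not from $F\notin\mathscr{R}_{(s,t)}$.
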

\begin{proof}
Since \(R_{\mathscr D}\) is  pole-free on \(\mathscr D\),  \(F-R_{\mathscr D}\) is analytic in the interior of \(\mathscr D\) and continuous on \(\mathscr D\). By the maximum Frobenius norm principle and $F\in \mathfrak{C}_{A}(\mathscr{D})\setminus \scrR_{(s,t)}$, we know ${\cal E}(R_{\mathscr D})\subseteq \mathscr{L}$ and
$$ 
\|F-R_{\mathscr D}\|_{\infty,\mathscr D}
=\|F-R_{\mathscr D}\|_{\infty,\mathscr L}.
$$
Consequently, every certificate point \(z_k\in{\cal E}(R_{\mathscr D})\) is also an extreme point for the boundary problem. Moreover, the value \(\|\hat e\|_{\infty,\mathscr B}\) entering \(H(x)\) is the same for \(\mathscr B=\mathscr D\) and \(\mathscr B=\mathscr L\). Therefore the same matrix
\(\mathbf H=\sum_{k=1}^r\omega_kH(z_k)\succeq0\) is a Ruttan certificate for \(R_{\mathscr D}\) on \(\mathscr L\). Applying Theorem \ref{thm:ruttan_sufficient} with \(\mathscr B=\mathscr L\) proves (i).

For (ii), since \(\mathscr X\subset\mathscr L\) contains the certificate points \(\{z_k\}_{k=1}^r\), we have
\[
\|F-R_{\mathscr D}\|_{\infty,\mathscr X}
=\|F-R_{\mathscr D}\|_{\infty,\mathscr L}
=\|F-R_{\mathscr D}\|_{\infty,\mathscr D}.
\]
Thus the same points \(z_k\), weights \(\omega_k\), and positive semidefinite matrix \(\mathbf H\) satisfy Ruttan's sufficient condition for the discrete problem with \(\mathscr B=\mathscr X\). A second application of Theorem \ref{thm:ruttan_sufficient} gives that \(R_{\mathscr D}\) is a minimax approximant of \(F\) on \(\mathscr X\).
\end{proof}

\section{Algorithm and foundations for the discrete rational minimax approximations}\label{sec:algorithm}
Based on our discussion in Section \ref{subsec:boundcont}, we now assume that $\mathscr{B}$ is a finite discrete set with \(m \ge \max_{1 \le i \le s,1 \le j \le t}\{n_{ij}+d+2\}\), usually sampled on $\partial \mathscr{D}$. Write \(R:=[r_{ij}]=P/q \in \mathscr{R}_{(s,t)}\) as
\begin{equation}\label{eq:basis_express}
    r_{ij}(x)=\frac{p_{ij}(x)}{q(x)}=\frac{[\psi_0(x),\dots,\psi_{n_{ij}}(x)]\pmb{a}_{ij}}{[\phi_0(x),\dots,\phi_{d}(x)]\pmb{b}},\ \ {\rm for\ some}\ \pmb{a}_{ij} \in \mathbb{C}^{n_{ij}+1},\ \pmb{b} \in \mathbb{C}^{d+1},\ \forall i,j,
\end{equation}
and define the coefficient matrix
\begin{equation}
    \pmb{\Psi}_{ij}=\pmb{\Psi}_{ij}(x_1,\dots,x_m;n_{ij}):=\begin{bmatrix}
        \psi_0(x_1)&\psi_1(x_1)&\dots&\psi_{n_{ij}}(x_1)\\\psi_0(x_2)&\psi_1(x_2)&\dots&\psi_{n_{ij}}(x_2)\\ \vdots & \vdots & \ddots & \vdots\\ \psi_0(x_m)&\psi_1(x_m)&\dots&\psi_{n_{ij}}(x_m)
    \end{bmatrix} \in \mathbb{C}^{m \times \left(n_{ij}+1\right)},\ \forall i,j.
\end{equation}
Analogously, we let \(\pmb{\Phi}=\pmb{\Phi}(x_1,\dots,x_m;d)=[\phi_{j-1}(x_i)]\in \mathbb{C}^{m \times (d+1)}\).
\subsection{A brief review of a dual-based method: {\sf m-d-Lawson}}
As an extension of {\sf d-Lawson} iteration for the scalar-valued case \cite{zhyy:2025,zhha:2025,zhzh:2026}, in \cite{zhzz:2025}, a max-min type dual formulation of the original minimax approximation \eqref{eq:bestf0} is defined  as
\begin{equation}\label{eq:dual_problem}
    \max_{\pmb{w} \in {\cal S}}d(\pmb{w}),
\end{equation}
where \begin{equation}\label{eq:simplex_def}
    {\cal S}=\left\{\pmb{w}=[w_1,\dots,w_m]^{\rm T} \in \mathbb{R}^m : \pmb{w} \ge 0, \sum\limits_{k=1}^mw_{k}=1\right\},
\end{equation}
is the probability simplex and 
\(d(\pmb{w})\)  is the dual function with respect to the dual variable $\bw$ defined by 
\begin{align}\nonumber
    d(\pmb{w})&=\min_{p_{ij} \in \mathbb{P}_{n_{ij}},q \in \mathbb{P}_d, \sum\limits_{k=1}^mw_{k}|q(x_{k})|^2=1}\sum\limits_{k=1}^{m}w_{k}\left(\sum\limits_{i=1}^{s}\sum\limits_{j=1}^{t}|f_{ij}(x_{k})q(x_{k})-p_{ij}(x_{k})|^2\right) \\ \label{eq:lagrange_function}&=\min_{\pmb{a} \in \mathbb{C}^n,\pmb{b} \in \mathbb{C}^{d+1},\|\sqrt{W}\pmb{\Phi}\pmb{b}\|_2=1}\|\sqrt{\pmb{W}_{\otimes}}\left(\pmb{F}\pmb{\Phi}\pmb{b}-\pmb{\Theta}\pmb{a}\right)\|_2^2.
\end{align}
Here \(W={\rm diag}(\pmb{w})\), \(\pmb{W}_{\otimes}=I_{g}\otimes W\), $\pmb{a}=[\pmb{a}_{11}^{\rm T},\dots,\pmb{a}_{s1}^{\rm T},\pmb{a}_{12}^{\rm T},\dots,\pmb{a}_{st}^{\rm T}]^{\rm T},   1\le i\le s, 1\le j\le t,$
\begin{equation}\label{eq:F_Theta_def}
    \pmb{F}=\begin{bmatrix}
        F_{11} \\ \vdots \\ F_{s1} \\ F_{12} \\ \vdots \\ F_{st}
    \end{bmatrix} \in \mathbb{C}^{gm \times m}, \ \ \pmb{\Theta}=\begin{bmatrix}
        \pmb{\Psi}_{11}&&&&&\\&\ddots&&&&\\&&\pmb{\Psi}_{s1}&&&\\&&&\pmb{\Psi}_{12}&&\\&&&&\ddots&\\&&&&&\pmb{\Psi}_{st}
    \end{bmatrix} \in \mathbb{C}^{gm\times n},
\end{equation}
 and  \(F_{ij}={\rm diag}(f_{ij}(x_1),\dots,f_{ij}(x_m)) \in \mathbb{C}^{m \times m}\), \(g=st\), \(n=\sum\limits_{i=1}^s\sum\limits_{j=1}^t\left(n_{ij}+1\right)\).

Note that the maximum of the dual problem in \eqref{eq:dual_problem} is always achievable (see \cite[Proposition 3.1]{zhzz:2025}). Furthermore, a straightforward consequence of the dual problem is the so-called weak duality \cite[Theorem 2.1]{zhzz:2025}:
\begin{equation}\label{eq:weakduality}
\max_{\pmb{w} \in {\cal S}}d(\pmb{w})\le \eta_{\infty,\mathscr{X}}:=\inf_{R \in\scrR_{(s,t)}}\sup_{x\in \mathscr{X}}\|F(x)-R(x)\|_{\F}^2,
\end{equation}
meaning the maximum of the dual problem provides a lower bound of $\eta_{\infty,\mathscr{X}}$. A desired property of the dual problem is the so-called strong duality:
\begin{equation}\label{eq:strongdualitydef}
\max_{\pmb{w} \in {\cal S}}d(\pmb{w})= \eta_{\infty,\mathscr{X}}.
\end{equation}
When strong duality holds and \eqref{eq:bestf0} has a solution, the optimal approximation error $\eta_{\infty,\mathscr{X}}$ is guaranteed to be attained by a rational approximant $\hat R$, which can in principle be obtained by solving the convex dual problem \eqref{eq:dual_problem}.  This assertion is understood in the sense of the scalar Theorem 4.3 of \cite{zhyy:2025}: solvability of the original discrete minimax problem together with   strong duality leads to Ruttan's certificate, and hence to a recoverable global minimax approximant. For a concrete optimizer of the dual problem, the quotient produced from the associated pair \((\hat P,\hat q)\) must still be well defined on \(\mathscr X\), i.e., \(\hat q(x_j)\ne0\) for all \(x_j\in\mathscr X\), in order to evaluate the residuals and recover \(\hat R=\hat P/\hat q\). However, it should be noted that this favorable condition \eqref{eq:strongdualitydef} is not universally satisfied for arbitrary rational approximation problems (see \cite{zhzz:2025} for more discussion on this issue).

To see the solution structure of the dual problem \eqref{eq:dual_problem},  for \(\pmb{w} \ge 0\) and \(W={\rm diag}(\pmb{w})\), we introduce the \(\pmb{w}\)-inner product (positive semidefinite): $$\langle\pmb{y},\pmb{z}\rangle_{W}=\pmb{y}^{\rm H}W\pmb{z}, ~{\rm and}~\|\pmb{y}\|_{W}=\sqrt{\langle\pmb{y},\pmb{y}\rangle_{W}},$$ which, for simplicity, will also be written as \(\langle\pmb{y},\pmb{z}\rangle_{\pmb{w}}\) and \(\|\pmb{y}\|_{\pmb{w}}\), respectively. The following theorem provides the optimality conditions for the pair \((\pmb{a}(\pmb{w}),\pmb{b}(\pmb{w}))\) to be the solution of \eqref{eq:lagrange_function} at given \(\pmb{w} \in {\cal S}\).
\begin{theorem} \label{thm:dualoptm}
For \(\pmb{w} \in {\cal S}\), with the notations in \eqref{eq:dual_problem}, \eqref{eq:simplex_def}, we have 
\begin{itemize}
\item[(i)] {\rm (\cite[Proposition 3.1]{zhzz:2025})}
$\bc(\bw)=\left[\begin{array}{c} \ba(\bw) \\ \bb(\bw)\end{array}\right] \in \bbC^{n+d+1}$ is a solution of \eqref{eq:dual_problem} if and only if it  {is} an eigenvector of the Hermitian positive semidefinite generalized eigenvalue problem $(A_{\bw},B_{\bw})$ and  $d(\bw)$ is the smallest eigenvalue  satisfying
\begin{equation}\label{eq:dual_GEP}
[A_{\bw}-d(\bw) B_{\bw}] \bc(\bw)=\bzs~ \mbox{ and }~ (\bc(\bw))^{\HH}B_{\bw}\bc(\bw) =1,
\end{equation} 
where
\begin{align}\label{eq:dual_GEPA}
A_{\bw}:&=[-{\bf \Theta}, \BF{\bf \Phi}]^{\HH}\BW[-{\bf \Theta}, \BF{\bf \Phi}]=\left[\begin{array}{cc}{\bf \Theta}^{\HH}\BW{\bf \Theta} & -{\bf \Theta}^{\HH}\BW\BF{\bf \Phi} \\-{\bf \Phi}^{\HH} \BF^{\HH}\BW  {\bf \Theta} & {\bf\Phi}^{\HH}\BF^{\HH}\BW\BF{\bf\Phi}\end{array}\right],\\\label{eq:dual_GEPB}
B_{\bw}:&= [0_{m\times n},{\bf \Phi}]^{\HH} W[0_{m\times n},{\bf\Phi}]=\left[\begin{array}{cc}0_{n\times n} & 0_{n\times (d+1)} \\0_{(d+1)\times n} & {\bf \Phi}^{\HH}W{\bf \Phi} \end{array}\right]\in \bbC^{(n+d+1)\times (n+d+1)};
\end{align}
\item[(ii)] {\rm ({\cite[Corollary 3.1]{zhzz:2025}})} for any \(1 \le i \le s\) and \(1 \le j \le t\), it holds that
    \begin{equation}\label{eq:mdlawsonorthorgnalcontion}
        F_{ij}\pmb{q}-\pmb{p}_{ij} \perp_{\pmb{w}} {\rm span}\left(\pmb{\Psi}_{ij}\right),\ \ \pmb{F}^{\rm H}\left(\pmb{F}\pmb{q}-\pmb{p}\right)-d(\pmb{w})\pmb{q} \perp_{\pmb{w}} {\rm span}\left(\pmb{\Phi}\right),
    \end{equation}
    or equivalently, 
    \begin{equation}\label{eq:mdlawsonorthorgnalcontionb}
    \pmb{F}\pmb{q}-\pmb{p} \perp_{\pmb{W}_{\otimes}} {\rm span}\left(\pmb{\Theta}\right),\ \ \pmb{F}^{\rm H}\left(\pmb{F}\pmb{q}-\pmb{p}\right)-d(\pmb{w})\pmb{q} \perp_{W} {\rm span}\left(\pmb{\Phi}\right),
    \end{equation}
    where  \(\pmb{p}=\pmb{\Theta}\pmb{a}(\pmb{w})=[\pmb{p}_{11}^{\rm T},\dots,\pmb{p}_{st}^{\rm T}]^{\rm T} \in \mathbb{C}^{mg}\) and \(\pmb{q}=\pmb{\Phi}\pmb{b}(\pmb{w}) \in \mathbb{C}^m\)  with  \(\pmb{p}_{ij}=\pmb{\Psi}_{ij}\pmb{a}_{ij} \in \mathbb{C}^m\).
\end{itemize}
\end{theorem}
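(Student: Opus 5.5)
We need to prove Theorem \ref{thm:dualoptm}: (i) characterizes the solution of the inner minimization \eqref{eq:lagrange_function} as the eigenvector of the smallest eigenvalue of the pencil $(A_{\bw},B_{\bw})$, and (ii) gives the orthogonality conditions \eqref{eq:mdlawsonorthorgnalcontion}.

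For (i), rewrite the objective of \eqref{eq:lagrange_function} as a Hermitian quadratic form. With $\bc=[\ba^{\T},\bb^{\T}]^{\T}$ we have $\BF\mathbf{\Phi}\bb-\mathbf{\Theta}\ba=[-\mathbf{\Theta},\BF\mathbf{\Phi}]\bc$. Hence the objective equals $\bc^{\HH}A_{\bw}\bc$ and the constraint is $\bc^{\HH}B_{\bw}\bc=1$. Both matrices are Hermitian positive semidefinite by construction, though $B_{\bw}$ is singular.

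The problem is therefore to minimize a generalized Rayleigh quotient over the set where $\bc^{\HH}B_{\bw}\bc=1$. I will use Lagrange multipliers over $\bbC^{n+d+1}\cong\mathbb{R}^{2(n+d+1)}$. The constraint gradient $B_{\bw}\bc$ is nonzero on the feasible set, so a stationary point satisfies $A_{\bw}\bc=\lambda B_{\bw}\bc$. Multiplying by $\bc^{\HH}$ gives $\lambda=\bc^{\HH}A_{\bw}\bc$, so the minimum value $d(\bw)$ is the smallest $\lambda$ for which such a normalized $\bc$ exists.

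For the converse, fix a normalized eigenpair $(\lambda,\bc)$ with $\lambda=d(\bw)$. For any feasible $\bc'$, the quotient $\bc'^{\HH}A_{\bw}\bc'$ is at least $d(\bw)$ by definition of the minimum. So every such eigenvector attains the minimum.

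\textbf{Main obstacle.} The singularity of $B_{\bw}$ (and of $\mathbf{\Phi}^{\HH}W\mathbf{\Phi}$ when $\bw$ has few positive entries) makes the pencil possibly singular or with infinite eigenvalues. It also raises the question of whether the minimum is attained at all. I will handle both points as follows.

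\begin{itemize}
\item \emph{Eliminating $\ba$.} For fixed $\bb$, minimizing over $\ba$ is a weighted least-squares problem. It is solved by projecting $\BF\mathbf{\Phi}\bb$ onto ${\rm span}(\mathbf{\Theta})$ in the $\BW$ inner product, and the minimizer always exists.
\item \emph{Reduction to $\bb$.} This reduces the problem to a Rayleigh quotient in $\bb$ alone. It involves the PSD matrix $\mathbf{\Phi}^{\HH}W\mathbf{\Phi}$ and a Schur-complement-type PSD matrix $S$.
\item \emph{Attainment.} Restrict to the quotient by $\ker(\mathbf{\Phi}^{\HH}W\mathbf{\Phi})$ and work on the positive definite part. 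The feasible set modulo the kernel is compact, so the minimum is attained. Directions in the kernel do not change the constraint and can only increase nothing harmful; I will check that they do not affect the minimal value.
\item \emph{Matching the eigenvalue.} The minimum then equals the smallest finite eigenvalue of the pencil, which I take as the meaning of "smallest eigenvalue" in \eqref{eq:dual_GEP}.
\end{itemize}

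For (ii), write the eigen-equation in block form. The first block row reads $\mathbf{\Theta}^{\HH}\BW(\mathbf{\Theta}\ba-\BF\mathbf{\Phi}\bb)=0$, which says $\BF\bm{q}-\bm{p}\perp_{\BW}{\rm span}(\mathbf{\Theta})$. Because $\mathbf{\Theta}$ is block diagonal and $\BW=I_g\otimes W$, this splits into $F_{ij}\bm{q}-\bm{p}_{ij}\perp_{\bw}{\rm span}(\mathbf{\Psi}_{ij})$ for each $(i,j)$.

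The second block row reads $\mathbf{\Phi}^{\HH}\BF^{\HH}\BW(\BF\bm{q}-\bm{p})=d(\bw)\mathbf{\Phi}^{\HH}W\bm{q}$. Since the diagonal $F_{ij}$ commute with $W$, we have $\BF^{\HH}\BW=W\BF^{\HH}$. The equation then becomes $\mathbf{\Phi}^{\HH}W\bigl(\BF^{\HH}(\BF\bm{q}-\bm{p})-d(\bw)\bm{q}\bigr)=0$, which is the second condition in \eqref{eq:mdlawsonorthorgnalcontionb}. The equivalence with \eqref{eq:mdlawsonorthorgnalcontion} is exactly this block splitting.
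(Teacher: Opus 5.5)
Your proof is correct: it is the standard constrained generalized-Rayleigh-quotient argument (stationarity of $\bc^{\HH}A_{\bw}\bc$ on $\bc^{\HH}B_{\bw}\bc=1$, then reading off the two block rows using $\BF^{\HH}\BW=W\BF^{\HH}$), which is the content of \cite[Proposition 3.1, Corollary 3.1]{zhzz:2025} that the paper quotes without reproving. The one step you only promised is easy to close. If $\bb_0\in\ker(\mathbf{\Phi}^{\HH}W\mathbf{\Phi})$, then $\sqrt{W}\mathbf{\Phi}\bb_0=\bzs$, so $\sqrt{\BW}\,\BF\mathbf{\Phi}\bb_0=\BF\sqrt{W}\mathbf{\Phi}\bb_0=\bzs$ and hence $[\bzs^{\T},\bb_0^{\T}]^{\T}\in\ker A_{\bw}\cap\ker B_{\bw}$. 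This gives attainment, by compactness of $\{\bb\perp\ker(\mathbf{\Phi}^{\HH}W\mathbf{\Phi}):\bb^{\HH}\mathbf{\Phi}^{\HH}W\mathbf{\Phi}\bb=1\}$. Because the pencil is then singular, it also explains why ``smallest eigenvalue'' must be read as the smallest eigenvalue with a $B_{\bw}$-normalizable eigenvector, which is exactly how you used it in your Lagrange step.
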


Let \(\hat{\pmb{w}} \in {\cal S}\) be any global maximizer of the dual problem \eqref{eq:dual_problem} with the corresponding pair \((\pmb{a}(\hat{\pmb{w}}),\pmb{b}(\hat{\pmb{w}}))\) from \eqref{eq:lagrange_function} at \(\hat{\bw}\).  {Assume that the denominator \(\hat q(x)=\pmb{b}(\hat{\pmb{w}})^{\rm T}\bm{\phi}(x)\) satisfies \(\hat q(x_j)\ne0\) for all \(x_j\in\mathscr X\).} Let \(\hat{R}=[\hat{r}_{ij}] \in \mathscr{R}_{(s,t)}\) be the representation of the rational function given by \(\hat{r}_{ij}=\hat{p}_{ij}/\hat{q}=\left(\pmb{a}_{ij}^{\rm T}(\hat{\pmb{w}})\bm{\psi}_{ij}\right)/\left(\pmb{b}(\hat{\pmb{w}})^{\rm T}\bm{\phi}\right)\), and \(\|\hat{e}\|_{\infty,\mathscr{X}}=\max_{x \in \mathscr{X}}\|F(x)-\hat{R}(x)\|_{\rm F}^2\). In 
 \cite[Theorem 4.1]{zhzz:2025}, it shows that  strong duality \eqref{eq:strongdualitydef} holds if
\begin{equation}\label{eq:strongduality}
    {d(\hat{\pmb{w}})}=\|\hat{e}\|_{\infty,\mathscr{X}}.
\end{equation}
Notice that the condition \eqref{eq:strongduality} is computationally more convenient  to verify than \eqref{eq:strongdualitydef} and can also be used as rule to check the accuracy of a computed solution (see Step 4 in Algorithm \ref{alg:mdLawson}). 

In the next two subsections \ref{subsec:strongdualRuttan} and \ref{subsection:lawson_kolmogorov_strong_duality}, we shall show respectively that  
\begin{itemize}
\item[1)] Ruttan's sufficient global optimality \eqref{eq:ruttan_semipositive}  is equivalent to  \eqref{eq:strongduality}  {for a dual optimizer whose associated quotient is well defined on \(\mathscr X\)}, and the Hermitian positive semidefinite $H_{\hat\bw}:=A_{\hat\bw}-d(\hat\bw) B_{\hat\bw}\succeq 0$ given in \eqref{eq:dual_GEP} serves as the matrix $\mathbf{H}$ of \eqref{eq:ruttan_semipositive}, and
\item[2)] under  \eqref{eq:strongduality}, the conditions\footnote{The conditions \eqref{eq:mdlawsonorthorgnalcontion} or \eqref{eq:mdlawsonorthorgnalcontionb} are equivalent to $(A_{\bw}-d(\bw) B_{\bw}) \bc(\bw)=\bzs$ in \eqref{eq:dual_GEP}.} \eqref{eq:mdlawsonorthorgnalcontion} is just the Kolmogorov dual criteria \eqref{eq:matrixKolmogorovDual}. 
\end{itemize}
To obtain a global maximizer $\hat \bw$ of  the dual problem \eqref{eq:dual_problem}, \cite{zhzz:2025} proposes the {\sf m-d-Lawson} iteration (Algorithm \ref{alg:mdLawson}) whose convergence is also  developed in \cite[Section 7]{zhzz:2025}.
 {In Step 3 of Algorithm \ref{alg:mdLawson}, the associated rational matrix \(R^{(k)}\) is evaluated directly when the denominator \(\pmb{b}(\pmb{w}^{(k)})^{\rm T}\bm{\phi}(x_\ell)\) is nonzero on the retained nodes.}

\begin{algorithm}[!ht]
\caption{The {\sf m-d-Lawson} iteration \cite{zhzz:2025} for \eqref{eq:bestf0} with \(\mathscr{B}=\mathscr{X}=\{x_{\ell}\}_{\ell=1}^m\)} \label{alg:mdLawson}
\begin{algorithmic}[1]
\renewcommand{\algorithmicrequire}{\textbf{Input:}}
\renewcommand{\algorithmicensure}{\textbf{Output:}}
\REQUIRE Given \(\{(x_{\ell},F(x_{\ell}))\}_{\ell=1}^m\) with \(x_{\ell} \in \mathscr{X}\), a tolerance for strong duality \(\epsilon_r>0\), the maximum number \(k_{\rm maxit}\) of iterations;
\smallskip
\STATE (Initialization) Let \(k=0\); choose \(\bzs<\pmb{w}^{(0)} \in {\cal S}\) and a tolerance \(\epsilon_{\pmb{w}}\) for weights;
\STATE (Filtering) Remove nodes \(x_{\ell}\) with \(w^{(k)}_{\ell}<\epsilon_{\pmb{w}}\);
\STATE compute \(d(\pmb{w}^{(k)})\) and the associated \(R^{(k)}=\frac{\pmb{a}_{ij}(\pmb{w})^{\rm T}\bm{\psi}_{ij}}{\pmb{b}(\pmb{w})^{\rm T}\bm{\phi}}\) from \eqref{eq:lagrange_function};
\STATE (Stop rule) Stop either if \(k \ge k_{maxit}\) or
\begin{equation}\nonumber
    \epsilon(\pmb{w}^{(k)}):=\left|\frac{\zeta(R^{(k)})- {d(\pmb{w}^{(k)})}}{\zeta(R^{(k)})}\right|<\epsilon_r,\ {\rm where}\ \ \zeta(R^{(k)})=\|F(x)-R^{(k)}(x)\|_{\infty,\mathscr{X}};
\end{equation}
\STATE (Updating weights) Update the weight vector \(\pmb{w}^{(k+1)}\) according to 
\begin{equation}\label{eq:Update_rule_lawson}
    w^{(k+1)}_{\ell}=\frac{w^{(k)}_{\ell}\|F(x_{\ell})-R^{(k)}(x_{\ell})\|_{\rm F}^{\beta}}{\sum\limits_{i}w^{(k)}_i\|F(x_i)-R^{(k)}(x_i)\|_{\rm F}^{\beta}},\ \ \ \forall \ell,
\end{equation}
and go to step 2 with \(k=k+1\).
\end{algorithmic}
\end{algorithm}

\subsection{Strong duality and Ruttan's sufficient global optimality}\label{subsec:strongdualRuttan}
Let   \(\hat{\pmb{w}} \in {\cal S}\) be any global maximizer of the dual problem \eqref{eq:dual_problem} with the corresponding pair \((\pmb{a}(\hat{\pmb{w}}),\pmb{b}(\hat{\pmb{w}}))\) from \eqref{eq:lagrange_function}. 
To understand this connection between strong duality \eqref{eq:strongdualitydef} and Ruttan's sufficient global optimality in Theorem \ref{thm:ruttan_sufficient}, the following theorem is crucial.
 
\begin{theorem}\label{thm:strongdualityRuttan}
Let $\hat\bw\in{\cal S}$ be the maximizer of the   dual {problem} \eqref{eq:dual_problem}. 
Ruttan's sufficient condition \eqref{thm:ruttan_sufficient} is satisfied  if and only if 
\begin{equation}\label{eq:strongdualityRuttan}
 {d(\hat{\pmb{w}})}=\|\hat{e}\|_{\infty,\mathscr{X}}:=\max_{x \in \mathscr{X}}\|F(x)-\hat{R}(x)\|_{\rm F}^2,
\end{equation}
{where \(\hat R=\hat P/\hat q\in\scrR_{(s,t)}\) is induced by a pair \((\hat P,\hat q)\) that achieves the minimum \(d(\hat\bw)\) of \eqref{eq:lagrange_function} with \(\bw=\hat\bw\) and satisfies\footnote{The particular dual pair \((\hat P,\hat q)\) need not be irreducible. Since \(\hat q(x_j)\ne0\) on \(\mathscr X\), canceling common factors from \(\hat P\) and \(\hat q\) preserves all values of \(\hat P/\hat q\) on \(\mathscr X\), and hence preserves the discrete objective value and the extreme set. Thus the same discrete approximant admits an equivalent irreducible representation when Ruttan's condition is invoked.}  \(\hat q(x_j)\ne0\) for all \(x_j\in\mathscr X\).}  
\end{theorem}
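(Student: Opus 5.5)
The plan is to identify the Ruttan matrix with the dual Hessian-type matrix $H_{\hat\bw}:=A_{\hat\bw}-d(\hat\bw)B_{\hat\bw}$. The first step is to compute, for a weight vector $\bw\in{\cal S}$, the weighted sum $\sum_{k}w_kH(x_k)$ where $H(x)$ is the matrix in \eqref{eq:H_x_def} built with the constant $\|\hat e\|_{\infty,\mathscr X}$. Using \eqref{eq:u_equality_with_H}, for coefficient vectors $\bc=[\pmb g^{\T},\pmb h^{\T}]^{\T}$ we have
\[
\bc^{\HH}\Big(\sum_k w_kH(x_k)\Big)\bc=\sum_k w_k\big(\|F(x_k)h(x_k)-G(x_k)\|_{\F}^2-|h(x_k)|^2\|\hat e\|_{\infty,\mathscr X}\big).
\]
From \eqref{eq:dual_GEPA}--\eqref{eq:dual_GEPB}, we also have $\bc^{\HH}(A_{\bw}-\mu B_{\bw})\bc=\|\sqrt{\BW}(\BF\pmb\Phi\pmb h-\pmb\Theta\pmb g)\|_2^2-\mu\|\sqrt W\pmb\Phi\pmb h\|_2^2$. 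This requires care with the sign convention, since the dual uses $\BF\pmb\Phi\pmb b-\pmb\Theta\pmb a$ with $\pmb a$ paired to $-\pmb\Theta$, while $H$ uses $Fh-G$; the mismatch is only a reordering and sign of blocks, i.e.\ a congruence by a diagonal $\pm$ permutation. Hence $\sum_k w_kH(x_k)$ is congruent to $A_{\bw}-\|\hat e\|_{\infty,\mathscr X}B_{\bw}$, and definiteness is preserved.

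For ($\Leftarrow$), assume \eqref{eq:strongdualityRuttan}. By Theorem \ref{thm:dualoptm}(i), $d(\hat\bw)$ is the smallest eigenvalue of the positive semidefinite pencil, which means $\bc^{\HH}A_{\hat\bw}\bc\ge d(\hat\bw)\bc^{\HH}B_{\hat\bw}\bc$ for all $\bc$; this is just the definition of $d$ as a constrained minimum, extended by homogeneity and handling $B$-null vectors trivially since $A\succeq0$. Thus $H_{\hat\bw}\succeq0$, and by the congruence $\mathbf H=\sum_k\hat w_kH(x_k)\succeq0$. It remains to show that the support of $\hat\bw$ lies in ${\cal E}(\hat R)$. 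Write $d(\hat\bw)=\sum_k\hat w_k\|F(x_k)\hat q(x_k)-\hat P(x_k)\|_{\F}^2$ with $\sum_k\hat w_k|\hat q(x_k)|^2=1$. Then
\[
d(\hat\bw)=\sum_k\hat w_k|\hat q(x_k)|^2\|F(x_k)-\hat R(x_k)\|_{\F}^2\le\|\hat e\|_{\infty,\mathscr X}.
\]
Equality forces $\|F(x_k)-\hat R(x_k)\|_{\F}^2=\|\hat e\|$ whenever $\hat w_k\ne0$, because $\hat q(x_k)\ne0$ and the convex weights $\hat w_k|\hat q(x_k)|^2$ are all positive on the support. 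Taking $z_k$ to be the support points and $\omega_k=\hat w_k$ then gives Ruttan's certificate, passing to the irreducible representation as in the footnote.

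For ($\Rightarrow$), assume Ruttan's condition holds with some certificate. By Theorem \ref{thm:ruttan_sufficient}, $\hat R$ is then a global minimizer. The key step is to build a dual-feasible weight $\bw^*\in{\cal S}$ with $\bw^*_k=\omega_k$ on the certificate points and zero elsewhere. Since $\mathbf H\succeq0$, for every feasible pair we get $\sum_k\omega_k\|Fh-G\|^2\ge\|\hat e\|\sum_k\omega_k|h|^2$, so $d(\bw^*)\ge\|\hat e\|_{\infty,\mathscr X}$. Combining this with weak duality \eqref{eq:weakduality} and $\eta_{\infty,\mathscr X}\le\|\hat e\|$ yields
\[
d(\hat\bw)\ge d(\bw^*)\ge\|\hat e\|\ge\eta_{\infty,\mathscr X}\ge d(\hat\bw),
\]
so all are equal and \eqref{eq:strongdualityRuttan} holds.

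The main obstacle is the interpretation of the statement: which $\hat R$ carries Ruttan's condition in the ($\Rightarrow$) direction. I would read it as Ruttan's condition holding for the $\hat R$ induced by the dual pair at $\hat\bw$, in which case the chain above closes. I also need to check that the constraint set $\sum_k\omega_k|q(x_k)|^2=1$ is nonempty, i.e.\ that $\pmb\Phi^{\HH}W^*\pmb\Phi\ne0$, which holds because $q\equiv1$ is admissible. Careful bookkeeping of the block congruence between $H(x)$ and $A_{\bw}-\mu B_{\bw}$ is the routine but error-prone part.
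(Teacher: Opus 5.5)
Your argument is correct. The sufficiency half rests on the same identification as the paper, $\sum_k\hat w_kH(x_k)=A_{\hat\bw}-d(\hat\bw)B_{\hat\bw}$. This holds up to a permutation of the $(i,j)$ coefficient blocks; there is in fact no sign discrepancy, since $\BF\pmb\Phi\pmb b-\pmb\Theta\pmb a$ is exactly $Fh-G$ under $\pmb a\leftrightarrow\pmb g$, $\pmb b\leftrightarrow\pmb h$. The paper simply asserts $H_{\hat\bw}\succeq0$ and imports the complementary slackness $\hat w_j=0$ for $x_j\notin{\cal E}(\hat R)$ from \cite[Theorem 5.1]{zhzz:2025}. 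You instead derive both in place: semidefiniteness from the constrained-minimum definition of $d$, and the support property from the averaging identity $d(\hat\bw)=\sum_k\hat w_k|\hat q(x_k)|^2\|F(x_k)-\hat R(x_k)\|_{\F}^2$ with weights summing to one.

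The necessity half takes a genuinely different route. The paper discards the given maximizer and builds a new weight vector from the certificate. It then shows $\hat{\bc}^{\HH}\mathbf H\hat{\bc}=0$, so the coefficient vector of $(\hat P,\hat q)$ is a minimizing eigenvector of the pencil there, and only afterwards uses weak duality to see that this weight vector is a dual maximizer. You keep the given $\hat\bw$ and use the certificate weights $\bw^*$ only as a lower bound, closing the sandwich $d(\hat\bw)\ge d(\bw^*)\ge\|\hat e\|_{\infty,\mathscr X}\ge\eta_{\infty,\mathscr X}\ge d(\hat\bw)$. This avoids the eigenvector and normalization step, and it does not actually need Theorem~\ref{thm:ruttan_sufficient}.

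Your version proves the statement as literally posed (fixed maximizer, pair chosen there), under the reading you make explicit. The paper's version is tailored to the structural point that the certificate weights are themselves a dual solution. Nothing is lost on your side, though. The sandwich also gives $d(\bw^*)=d(\hat\bw)$, and the normalized certified pair attains the value $\|\hat e\|_{\infty,\mathscr X}=d(\bw^*)$ at $\bw^*$ because every $z_k$ is extreme. Applied to an arbitrary certified approximant, your argument therefore recovers the paper's conclusion as well.
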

\begin{proof}
For the sufficiency,  assume \eqref{eq:strongdualityRuttan} holds.  {If the corresponding pair \((\hat P,\hat q)\) is reducible, cancel all common factors of the entries of \(\hat P\) and \(\hat q\), and denote the resulting irreducible representation by \(\hat R=\hat P_{\rm red}/\hat q_{\rm red}\). Since \(\hat q(x_j)\ne0\) for all \(x_j\in\mathscr X\), none of the canceled common factors vanishes on \(\mathscr X\). Hence \(\hat P_{\rm red}(x_j)/\hat q_{\rm red}(x_j)=\hat P(x_j)/\hat q(x_j)\) for all \(x_j\in\mathscr X\), so \(\|\hat e\|_{\infty,\mathscr X}\) and \({\cal E}(\hat R)\) are unchanged. We use this irreducible representation when applying Ruttan's sufficient condition.} By the definition of $H_k(x)$ for $k=1,2,3$ in \eqref{eq:H_x_def},  it holds that 
for the diagonal block ${\bf \Psi}_{ij}$  ($1\le i\le s,~1\le j\le t$) of ${\bf \Theta}$ in \eqref{eq:F_Theta_def},
$$[ {\bf \Psi}_{ij}^{\HH}\be_k\be_k^{\T} {\bf \Psi}_{ij}]_{\ell,u}=\be_{\ell}^{\T} {\bf \Psi}_{ij}^{\HH}\be_k\be_k^{\T}  {\bf \Psi}_{ij}\be_u=\overline{\psi_{\ell-1}(x_k)}\psi_{u-1}(x_k)= [H^{(ij)}_1(x_k)]_{\ell,u},~1 \le \ell,u \le n_{ij}+1,$$ 
and thus 
 \begin{subequations} \label{eq:HwRuttan}
 \begin{align}\label{eq:HwRuttan1}
 &{\bf \Psi}_{ij}^{\HH}\hat W{\bf \Psi}_{ij}=\sum_{k=1}^m \hat w_k   {\bf \Psi}_{ij}^{\HH}\be_k\be_k^{\T} {\bf \Psi}_{ij} =\sum_{k=1}^m \hat w_k  H^{(ij)}_1(x_k),\\
&  -{\bf \Psi}_{ij}^{\HH}F_{ij}\hat W{\bf \Phi}=\sum_{k=1}^m \hat w_k   H^{(ij)}_3(x_k),\\\label{eq:HwRuttan2}
 &{\bf \Phi}^{\HH}F_{ij}^{\HH}\hat WF_{ij}{\bf \Phi}-d(\hat \bw){\bf \Phi}^{\HH} \hat W{\bf \Phi}=\sum_{k=1}^m \hat w_k   H_2(x_k),
 \end{align}
 \end{subequations}
 where we have used ${d(\hat{\pmb{w}})}=\|\hat{e}\|_{\infty,\mathscr{X}}$ in \eqref{eq:strongdualityRuttan} to obtain the last equality. {Since \(\hat q(x_j)\ne0\) on \(\mathscr X\), the extreme set \({\cal E}(\hat R)\) is well defined.} Moreover,  \eqref{eq:strongduality} also ensues 
  \cite[Theorem 5.1]{zhzz:2025}:
 \begin{equation}\label{eq:complement} 
  \hat w_j=0, ~\forall x_j\not \in {\cal E}(\hat R).
  \end{equation}
Consequently,  by the definitions of the positive semidefinite matrix $H_{\hat\bw}$ in \eqref{eq:dual_GEP} and $H_k(x)$ for $k=1,2,3$ in   \eqref{eq:H_x_def},  we conclude  
$$0 \preceq H_{\hat \bw}=\sum_{k=1}^m \hat w_k  H(x_k)=\sum_{k: x_k\in{\cal E}(\hat R)} \hat w_k H(x_k).$$ In other words, we have proved that   \eqref{eq:strongdualityRuttan} (which ensures strong duality \eqref{eq:strongdualitydef}) guarantees Ruttan's sufficient condition \eqref{eq:ruttan_semipositive}; in particular, the  matrix $ H_{\hat \bw}$ serves as the positive semidefinite Hermitian matrix $\mathbf{H}$ in \eqref{eq:ruttan_semipositive}. 

For  the necessity, suppose that Ruttan's sufficient condition \eqref{eq:ruttan_semipositive} holds  for a rational {approximant}  $\hat R =\hat P/\hat q \in  \scrR_{(s,t)}$ with extreme points  $\{z_k\}_{k=1}^r\subseteq {\cal E}(\hat R)$ and $\{\omega_k\}_{k=1}^r$. Theorem {\ref{thm:ruttan_sufficient}} implies that $\hat R$ is the global minimax approximation of \eqref{eq:bestf0} with $\eta_{\infty,\mathscr{X}}=\|\hat e\|_{\infty,\mathscr{X}}$.
Without loss of generality, assume $z_k=x_k$ for $1\le k\le r$; define a vector $\hat \bw \in {\cal S}$ with $ \hat w_k =\omega_k$ if $1\le k\le r$ and $\hat w_k=0$ if $r+1 \le k\le m$. Thus, using the relation \eqref{eq:HwRuttan} and \eqref{eq:ruttan_semipositive}, we know the positive semidefinite matrix $\mathbf{H}$ can be expressed as
$$
\mathbf{H}=A_{\hat \bw } -\eta_{\infty,\mathscr{X}} B_{\hat \bw }\succeq 0.
$$
For \(\hat{R}=\hat P/\hat q=[\hat{r}_{ij}] \in \mathscr{R}_{(s,t)}\), denote by \(\hat{r}_{ij}=\hat{p}_{ij}/\hat{q}=\left(\hat {\ba}_{ij}^{\rm T} \bm{\psi}_{ij}\right)/\left(\hat{\bb}^{\rm T}\bm{\phi}\right)\) and $\hat \ba=[\hat{\ba}_{11}^{\rm T},\dots,\hat {\ba}_{s1}^{\rm T},\hat{\ba}_{12}^{\rm T},\dots,\hat{\ba}_{st}^{\rm T}]^{\rm T}$. Noting $0\ne \hat \bc=\left[\begin{array}{c}\hat \ba \\\hat \bb\end{array}\right] \in \bbC^{n+d+1}$ and by similar calculations as in \eqref{eq:quaradtic}, we have  
\begin{align*}
\hat \bc^{\HH}\left(A_{\hat \bw } -\eta_{\infty,\mathscr{X}} B_{\hat \bw }\right)\hat \bc
&=\sum\limits_{k=1}^{m}\hat w_k \left(\|F(x_k)\hat{q}(x_k)-\hat{P}(x_k)\|_{\rm F}^2-|\hat {q}(x_k)|^2\|\hat{e}\|_{\infty, \mathscr{X}}\right)\\
&=\sum\limits_{k=1}^{r} {\hat w_k}{|\hat{q}(x_k)|^2} \left(\|F(x_k)-\hat{R}(x_k)\|_{\rm F}^2-\|\hat{e}\|_{\infty, \mathscr{X}}\right)=0
\end{align*} 
where the last equality follows due to $\hat w_k=0$ if $r+1 \le k\le m$ and  $\|F(x_k)-\hat{R}(x_k)\|_{\rm F}^2=\|\hat{e}\|_{\infty, \mathscr{X}}$ for $x_1,\dots,x_k\in  {\cal E}(\hat R)$. Since $A_{\hat \bw } -\eta_{\infty,\mathscr{X}} B_{\hat \bw }\succeq 0$, it implies that  $\eta_{\infty,\mathscr{X}} =\|\hat{e}\|_{\infty, \mathscr{X}}$ is the smallest eigenvalue of the pencil $(A_{\hat \bw},  B_{\hat \bw})$ and $\hat \bc$ is the associated eigenvector. Note ${\hat  \bc}^{\HH}B_{\hat \bw}\hat  {\bc}=\sum_{k=1}^r \hat w_k |\hat q(x_k)|^2\ne 0$ {because \(\hat R\) is evaluable at the certificate extreme points and the weights are positive}. Normalize $(\hat P,\hat q)$ using $\hat \bw \in {\cal S}$ so that $\hat q$ is feasible for the minimization \eqref{eq:lagrange_function} (i.e., normalize $\hat \bc$ to have $\hat \bc^{\HH}B_{\hat \bw}\hat \bc=1$); thus $(\hat P,\hat q)$, by  Theorem  \ref{thm:dualoptm}, achieves the minimum $d(\hat \bw)$  of \eqref{eq:lagrange_function} with $\bw=\hat \bw$.  As the minimum $d(\hat \bw)$ of \eqref{eq:lagrange_function} is also the smallest eigenvalue of the pencil $(A_{\hat \bw},  B_{\hat \bw})$, we have $d(\hat \bw)=\eta_{\infty,\mathscr{X}}=\|\hat{e}\|_{\infty, \mathscr{X}}$ and  {weak} duality  \eqref{eq:weakduality} further ensures $\hat \bw$ is a global maximizer of the dual problem \eqref{eq:dual_problem}.  
\end{proof}

Unlike Ruttan's sufficient condition \eqref{eq:ruttan_semipositive}, which is impractical to verify numerically, the condition \eqref{eq:strongduality} offers a computationally tractable alternative  {once the associated rational matrix is well defined on \(\mathscr X\)}. In practice, we can efficiently solve the dual problem \eqref{eq:dual_problem} using established methods such as the {\sf m-d-Lawson} iteration, then numerically verify \eqref{eq:strongduality} at the solution. Moreover, the duality gap naturally provides an effective termination criterion for the {\sf m-d-Lawson} algorithm (see Step 4 in Algorithm \ref{alg:mdLawson}).

\subsection{{\sf m-d-Lawson} meets Kolmogorov’s condition}\label{subsection:lawson_kolmogorov_strong_duality}

In this subsection, we further show that under  \eqref{eq:strongduality}, if \(\hat{\pmb{w}} \in {\cal S}\) is the global maximizer of the dual problem \eqref{eq:dual_problem}, the optimality condition in \eqref{eq:mdlawsonorthorgnalcontion} is indeed the Kolmogorov dual criteria \eqref{eq:matrixKolmogorovDual}. To see this, let $$\hat{\pmb{p}}_{ij}=[\hat{p}_{ij}(x_1),\dots,\hat{p}_{ij}(x_m)]^{\rm T}, ~\hat{\pmb{p}}=[\hat{\pmb{p}}^{\rm T}_{11},\dots,\hat{\pmb{p}}^{\rm T}_{s1},\hat{\pmb{p}}^{\rm T}_{12},\dots,\hat{\pmb{p}}^{\rm T}_{st}]^{\rm T}, ~  \hat{\pmb{q}}=[\hat{q}(x_1),\dots,\hat{q}(x_m)]^{\rm T},$$ where \((\hat{P},\hat{q})\) is associated with \(\hat{\pmb{w}}\).

Consider \(\pmb{F}\hat{\pmb{q}}-\hat{\pmb{p}} \perp_{\widehat{\pmb{W}}_{\otimes}} {\rm span}\left(\pmb{\Theta}\right)\) first where   \(\widehat{W}={\rm diag}\left(\hat{\pmb{w}}\right)\ {\rm and}\ \widehat{\pmb{W}}_{\otimes}=I_{g}\otimes \widehat{W}\). Based again on \eqref{eq:complement},  for any $ 1\le i\le s, 1\le j\le t, 0\le  \ell\le  n_{ij}$, one can write it as
\begin{align}\label{eq:mdl_orthorgnal_item1} 
  \sum\limits_{k=1}^{m}\hat{w}_{k}\overline{\left(f_{ij}(x_{k})\hat{q}(x_{k})-\hat{p}_{ij}(x_{k})\right)}\psi_{\ell}(x_{k})=\sum\limits_{k:x_{k} \in {\cal E}(\hat{R})}\hat{w}_{k}\overline{\left(f_{ij}(x_{k})\hat{q}(x_{k})-\hat{p}_{ij}(x_{k})\right)}\psi_{\ell}(x_{k})=0.
\end{align}
Then for any \(1\le i_0 \le  s,\ 1\le j_0 \le t\) and \(0 \le \ell_0 \le n_{ij}\), the above   \eqref{eq:mdl_orthorgnal_item1} with $i=i_0, j=j_0$ and $\ell=\ell_0$ is equivalent to \eqref{eq:matrixKolmogorovDual} with choice
\begin{equation}\nonumber
    h(x)\equiv 0\ {\rm and}\ G(x)=[g_{ij}(x)]\ {\rm with}\ g_{ij}(x)=\begin{cases}
        -\psi_{\ell_0}(x),\ \ &i=i_0\ {\rm and}\ j=j_0,\\
        0,& {\rm otherwise}.
    \end{cases}
\end{equation}

Now, consider the second condition \(\pmb{F}^{\rm H}\left(\pmb{F}\hat{\pmb{q}}-\hat{\pmb{p}}\right)-d(\pmb{w})\hat{\pmb{q}} \perp_{\hat{\pmb{w}}} {\rm span}\left(\pmb{\Phi}\right)\). Using  \(d(\hat{\pmb{w}})=\|\hat{e}\|_{\infty,\mathscr{X}}=\|\hat{e}(x)\|_{\rm F}^2\ \forall x \in {\cal E}(\hat{R})\), for any $0\le \ell\le d$, one  has
\begin{align}\nonumber
    0 &= \sum\limits_{k=1}^{m}\hat{w}_{k}\left(\sum\limits_{i=1}^{s}\sum\limits_{j=1}^{t}f_{ij}(x_{k})\overline{\left(f_{ij}(x_{k})\hat{q}(x_{k})-\hat{p}_{ij}(x_{k})\right)}-\|\hat{e}\|_{\infty,\mathscr{X}}\cdot\overline{\hat{q}(x_{k})}\right)\phi_{\ell}(x_{k}) \\ \nonumber &=\sum\limits_{k:x_{k} \in {\cal E}(\hat{R})}\hat{w}_{k}\left(\sum\limits_{i=1}^{s}\sum\limits_{j=1}^{t}f_{ij}(x_{k})\overline{\left(f_{ij}(x_{k})\hat{q}(x_{k})-\hat{p}_{ij}(x_{k})\right)}-\|\hat{e}(x_{k})\|^2_{\rm F}\cdot\overline{\hat{q}(x_{k})}\right)\phi_{\ell}(x_{k}) \\ \nonumber &= \sum\limits_{k:x_{k} \in {\cal E}(\hat{R})}\hat{w}_{k}\cdot{\rm tr}\left(\left(F(x_{k})\hat{q}(x_{k})-\hat{P}(x_{k})\right)^{\rm H}\left(\hat{R}(x_{k})\phi_{\ell}(x_{k})\right)\right) \\ \label{eq:mdlorthorgnalitem2} &=\sum\limits_{k:x_{k} \in {\cal E}(\hat{R})}\hat{w}_{k}\left\langle\hat{E}(x_{k}),\phi_{\ell}(x_{k})\hat{P}(x_{k})\right\rangle,
\end{align}
where the second last equality is due to \(\|\hat{e}(x_{k})\|_{\rm F}^2=\sum\limits_{i=1}^{s}\sum\limits_{j=1}^{t}|f_{ij}(x_{k})-\hat{r}_{ij}(x_{k})|^2\) with a straightforward computation. Consequently, \eqref{eq:mdlorthorgnalitem2} corresponds to \eqref{eq:matrixKolmogorovDual} with 
\begin{equation}\nonumber
    h(x)=\phi_{\ell}(x) \in \mathbb{P}_{d}\ {\rm and}\ G(x)=[g_{ij}(x)]\ {\rm with}\ g_{ij}(x)\equiv 0,\ i=1,\dots,s,j=1,\dots,t.
\end{equation}

From the above analysis, we find that, under \eqref{eq:strongduality}, the solution computed by {\sf m-d-Lawson} automatically identifies the associated support extreme points \(x_{k}\) and the corresponding weights \(\omega_{k}\) in the Kolmogorov dual criteria \eqref{eq:matrixKolmogorovDual}.

\section{Concluding remarks}\label{sec:conclude}
In this paper, we have presented a theoretical framework for the rational minimax approximation of matrix-valued functions with a common denominator. Our treatments extend the classical scalar case while addressing new technical challenges arising from the matrix-valued Frobenius norm error and the shared denominator structure. By adapting Walsh's classical scalar existence proof \cite{wals:1931} via a denominator-preserving diagonal procedure, we established the existence of minimax approximants when the underlying domain $\mathscr{B}$ of \eqref{eq:bestf0} is dense in itself. We also developed necessary local optimality conditions analogous to Kolmogorov's criteria (Theorems \ref{thm:kolmogorov_primal} and \ref{thm:matrixKolmogorov_dual}) and sufficient conditions of Ruttan type for global optimality (Theorem \ref{thm:ruttan_sufficient}), tailored for the matrix-valued context to characterize global optimal solutions. 

The developed   optimality conditions, on the one hand, offer a rigorous theoretical foundation for matrix-valued rational approximation, and on the other hand, serve as a practical bridge between theoretical insights and computational applications. In particular, by relying on Ruttan's sufficient condition (Theorem \ref{thm:ruttan_sufficient}) for global optimality, we further clarified a sufficient condition (Theorem \ref{thm:sufXtoD}) to obtain rational minimax approximants for functions analytic in the interior of a   continuum via discrete rational minimax problems with finite nodes sampled on its boundary.  Furthermore, to solve the discrete case, we introduced a recently developed dual-based algorithm {\sf m-d-Lawson} \cite{zhzz:2025} and connected these optimality conditions with the dual framework of the {\sf m-d-Lawson} iteration: we showed in Theorem \ref{thm:strongdualityRuttan} that Ruttan's sufficient global optimality  is equivalent to \eqref{eq:strongdualityRuttan}, which is closely related to strong duality  \eqref{eq:strongdualitydef}, and the orthogonality conditions produced by the dual formulation are precisely Kolmogorov dual criteria under \eqref{eq:strongdualityRuttan}. This gives a practical interpretation of the duality gap used in the {\sf m-d-Lawson} method and explains how the method identifies support extreme points and their weights.

We conclude this paper by highlighting several issues that warrant further investigation. First, as in the scalar case, a full characterization of global optimality for matrix-valued rational minimax approximation over general compact sets remains an open question. Additionally, fundamental issues such as uniqueness, stability under boundary sampling, and convergence from discrete to continuum approximations require deeper theoretical analysis. Finally, developing efficient computational methods for large-scale matrix-valued problems presents significant practical challenges that have yet to be fully addressed.
 
 {\small
  \def\noopsort#1{}\def\l{\char32l}\def\v#1{{\accent20 #1}}
  \let\^^_=\v\def\hbk{hardback}\def\pbk{paperback}
\providecommand{\href}[2]{#2}
\providecommand{\arxiv}[1]{\href{http://arxiv.org/abs/#1}{arXiv:#1}}
\providecommand{\url}[1]{\texttt{#1}}
\providecommand{\urlprefix}{URL }

}

\end{document}